\documentclass[12pt,twoside,reqno]{amsart}
\usepackage[letterpaper,textwidth=6.9in,textheight=9.6in,centering]{geometry}
\usepackage{amsmath, amsfonts, amssymb}
\usepackage{mathrsfs}
\usepackage{latexsym}
\usepackage{mathtools}
\usepackage{exscale}
\usepackage{cases}
\usepackage{enumitem}
\usepackage{needspace}

\usepackage[pdfstartview=FitH,
            CJKbookmarks=true,
            bookmarksnumbered=true,
            bookmarksopen=true,
            hypertexnames=false,
            colorlinks=true,
            linkcolor=blue,
            anchorcolor=blue,
            citecolor=red,
            urlcolor=blue
            ]{hyperref}
\hypersetup{
    pdftitle={On the Reverse Projection Inequality},
    pdfauthor={},
    pdfsubject={The sharp three-dimensional inequality, its equality cases, and geometric high-dimensional counterexamples},
    pdfkeywords={projection body, reverse projection inequality, mixed volume, equality case, simplex section, cube projection, geometric perturbation}
}

\numberwithin{equation}{section}
\allowdisplaybreaks

\newtheorem{theorem}{Theorem}[section]

\newtheorem{lemma}[theorem]{Lemma}
\newtheorem{corollary}[theorem]{Corollary}

\newtheorem{proposition}[theorem]{Proposition}

\newtheorem{definition and theorem}[theorem]{Definition and Theorem}

\def\bl{\begin{lemma}}
\def\el{\end{lemma}}
\def\bc{\begin{corollary}}
\def\ec{\end{corollary}}
\def\bt{\begin{theorem}}
\def\et{\end{theorem}}

\def\bp{\begin{proposition}}
\def\ep{\end{proposition}}
\def\be{\begin{equation}}
\def\ee{\end{equation}}
\def\baa{\begin{align*}}
\def\eaa{\end{align*}}

\theoremstyle{definition}
\theoremstyle{remark}

\newcommand{\R}{\mathbb{R}}

\newcommand{\Sph}{\mathbb S}

\author[Y. Feng]{Yibin Feng}
\address{Yibin Feng, School of Mathematics and Statistics,
	Lanzhou University,
	Lanzhou, 730000, P.R. China.}
\email{fengyb@lzu.edu.cn}

\author[D. Zhang]{Ding Zhang}
\address{Ding Zhang, School of Mathematics and Statistics,
	Lanzhou University,
	Lanzhou, 730000, P.R. China.}
\email{zhding2023@lzu.edu.cn}

\begin{document}
\newcommand{\Kzero}{\mathcal{K}_o^n}
\newcommand{\Haus}{\mathcal{H}}
\newcommand{\pcap}{C}
\newcommand{\convlev}[2]{\langle #1\rangle_{#2}}
\newcommand{\gammaconst}{\gamma_{n,p}}
\newcommand{\avg}[1]{\left\langle #1\right\rangle_{\!\sigma}}

	\title[Integral Affine Surface
	Area-Variational Capacity Inequalities]
	{$L_p$-Integral Affine Surface Area-Variational Capacity Inequalities}
	
	\subjclass[2020]{52A40, 52A15, 52A20}
	
	\keywords{$L_p$ integral affine surface area, \(L_p\) variational capacity, surface area measure,
		Lorentz-Sobolev inequality, convex body}
	
	\begin{abstract}
	 For $n\geq 2$ and $1<p<n$, Ludwig-Xiao-Zhang \cite{LXYZ} asked whether the $L_p$-integral affine
	surface area of every convex body containing the origin in its interior is bounded above by the ball-normalized multiple of \(L_p\)-variational capacity, and whether the analogous analytic inequality holds for
	$L_p$-convexified level sets.  We answer both questions negatively.
	\end{abstract}
	
	\date{}
	\maketitle

	\section{Introduction}
	
	The setting of this paper is the $n$-dimensional Euclidean space $\mathbb{R}^n$. A convex body in $\mathbb{R}^n$ is a compact convex set that has a non-empty interior. The unit sphere in $\mathbb{R}^n$ is denoted by $\mathbb{S}^{n-1}$. Denote by $\mathcal{K}_o^n$ the set of all convex bodies that contain the origin in their interiors. Let \(B^n\subset\mathbb{R}^n\) denote the Euclidean unit ball, and write \(\omega_n=V(B^n)\) for its volume. We use $\mathcal{H}^k$ to denote the $k$-dimensional Hausdorff measure and $\partial K$ the boundary of a convex body $K$. 
	
	A convex body \(K\) is uniquely determined by its support function
	\(h_K\colon\mathbb{R}^n\to[0,\infty)\), given by
	\[
	h_K(x)=\sup\{x\cdot y:y\in K\},
	\ \ x\in\mathbb{R}^n,
	\]
	where \(\cdot\) denotes the standard Euclidean inner product on
	\(\mathbb{R}^n\).
	
	For \(p\geq 1\), the Minkowski-Firey \(L_p\)-sum of two convex
	bodies \(K\) and \(L\), denoted by \(K+_pL\), is defined through its
	support function by
	\[
	h(K+_pL,\cdot)^p
	=
	h(K,\cdot)^p+h(L,\cdot)^p.
	\]

For \(K,L\in\mathcal{K}_o^n\), their \(L_p\)-mixed volume is defined,
in \cite{LutwakFirey}, by the following variational formula:
\[
V_p(K,L)
=
\frac{p}{n}
\lim_{\varepsilon\to 0^+}
\frac{
	V\bigl(K+_p\varepsilon^{1/p}L\bigr)-V(K)
}{
	\varepsilon
}.
\]

A compact set \(M\subset\mathbb{R}^n\) is called a star body with
respect to the origin if
\(
[0,x]\subset M
\ \text{for every }x\in M.
\)
If, in addition, \(\partial M\) is Lipschitz, then \(M\) is called a
Lipschitz star body. In this case, the unit outer normal
\(\nu_M(x)\) exists for \(\mathcal{H}^{n-1}\)-almost every
\(x\in\partial M\). The radial function of \(M\) is the map
\(
\rho_M\colon\mathbb{R}^n\setminus\{0\}\to\mathbb{R}, 
\)
 which is given by
\[
\rho_M(x)
=
\max\{\lambda\geq 0:\lambda x\in M\},
\ \ x\neq 0.
\]
For a Lipschitz star body \(M\), define its core by
\[
D_M
=
\left\{
tx:t>0,\ x\in\partial M,\
\left|x\cdot\nu_M(x)\right|>0
\right\}.
\]
The restriction of the radial function \(\rho_M\) to \(D_M\) is
locally Lipschitz continuous.

Let \(M\subset\mathbb{R}^n\) be a Lipschitz star body and
\(L\subset\mathbb{R}^n\) a convex body. Following
\cite[Section~5.2]{LXYZ}, define
\begin{equation*}\label{}
	V_p(M,L)
	=
	\frac{1}{n}
	\int_{\partial M}
	h_L\bigl(\nu_M(x)\bigr)^p
	\left|x\cdot\nu_M(x)\right|^{1-p}\,dx.
\end{equation*}
Although \(V_p(M,L)\) is well defined, it may be infinite.

Let \(M\subset\mathbb{R}^n\) be a Lipschitz star body. For
\(p\geq 1\), the \(L_p\)-integral affine surface area \(\Phi_p(M)\) is
defined by (see \cite[Section~5.2]{LXYZ})
\begin{equation*}
	\Phi_p(M)
	=
	\frac{(n\omega_n)^{\frac{n+p}{n}}}{\beta_{n,p}}
	\left(
	\int_{\mathbb{S}^{n-1}}
	V_p(M,\bar{u})^{-\frac{n}{p}}\,du
	\right)^{-\frac{p}{n}},
\end{equation*}
where the normalizing constant \(\beta_{n,p}\) is given by
\begin{equation*}
	\beta_{n,p}
	=
	V_p(B^n,\bar{u})
	=
	\frac{1}{2^p n}
	\int_{\mathbb{S}^{n-1}}
	|u\cdot v|^p\,dv
\end{equation*}
with $
\bar{u}=[-u/2,u/2]$.

The \(L_p\)-integral affine surface area is a
projection affine functional. Its case \(p=1\) reduces
to the classical integral affine surface area associated with the
polar projection body, whose origins lie in Minkowski's theory of
projection bodies; see \cite[Section~9.1]{Schneider}. The \(L_p\) framework of this theory originated with Firey's introduction of \(L_p\)-addition \cite{Firey}. Further, Lutwak \cite{LutwakFirey} developed this
framework by establishing the theory of \(L_p\)-mixed volumes and
posing the \(L_p\)-Minkowski problem. Based on these developments, Lutwak-Yang-Zhang \cite{LYZ} introduced the notion of \(L_p\) projection
bodies and established the corresponding affine isoperimetric
inequalities, providing a natural geometric interpretation of the \(L_p\)-integral affine surface area.

The functional counterpart of the theory of projection bodies began with
Zhang's sharp affine Sobolev inequality \cite{ZhangAffine}. Subsequently, Lutwak-Yang-Zhang \cite{LYZSobolev} extended Zhang's result to the sharp \(L_p\) affine Sobolev inequality. Further developments include affine
Moser-Trudinger and Morrey-Sobolev inequalities
\cite{CianchiLutwakYangZhang}, asymmetric affine \(L_p\) Sobolev
inequalities \cite{HaberlSchuster}, and the asymmetric affine
P\'{o}lya-Szeg\"{o} principle \cite{HaberlSchusterXiao}. Ludwig-Xiao-Zhang \cite{LXYZ} employed the \(L_p\) convexification
of level sets to establish sharp convex Lorentz-Sobolev inequalities
involving the \(L_p\)-integral affine surface area.

Let \(E\subset\mathbb{R}^n\) be compact. For \(p\geq 1\), the
\(L_p\)-variational capacity of \(E\), denoted by \(C_p(E)\), is defined by
\[
C_p(E)
:=
\inf\left\{
\|\nabla f\|_p^p:
f\in C_0^\infty(\mathbb{R}^n),\
f\geq\mathbf{1}_E
\right\},
\]
where \(\mathbf{1}_E\) denotes the indicator function of \(E\), \(C_0^\infty(\mathbb{R}^n)\) denotes the set of functions on
\(\mathbb{R}^n\) that are smooth and have compact support, and
\(\|\cdot\|_p\) denotes the usual \(L_p\) norm for functions on
\(\mathbb{R}^n\). 

The
\(L_p\)-variational capacity originates in nonlinear potential theory. Although \(\Phi_p\)
and \(C_p\) arise from different theories, their normalizations on the unit ball satisfy
\begin{equation*}\label{}
	\Phi_p(B^n)
	=
	\gamma_{n,p}C_p(B^n)
	=
	n\omega_n
\end{equation*}
for \(1<p<n\), where
\begin{equation*}\label{}
	\gamma_{n,p}
	=
	\left(\frac{p-1}{n-p}\right)^{p-1}.
\end{equation*}
	
	For a Lipschitz star body \(M\subset\mathbb{R}^n\) and \(p\geq 1\),
	the \(L_p\)-surface area \(S_p(M)\) of \(M\) is defined, in \cite{LXYZ}, by
	\begin{equation*}
		S_p(M)
		=
		nV_p(M,B^n)
		=
		\int_{\partial M}
		\left|x\cdot\nu_M(x)\right|^{1-p}\,dx.
	\end{equation*}
Note that for \(p>1\), the \(L_p\)-surface area \(S_p(M)\) is not finite for all
Lipschitz star bodies. The normalization is chosen so that
	\(
	S_p(B^n)=n\omega_n.
	\)

The following result, established in \cite{LXYZ}, gives sharp comparisons
between the \(L_p\)-surface area, the \(L_p\)-variational capacity, the
\(L_p\)-integral affine surface area, and volume.

\noindent\textbf{Theorem.} \cite[Lemma 1]{LXYZ} \textit{If \(M\) is a Lipschitz star body in \(\mathbb{R}^n\) and
	\(1\leq p<n\), then}
		\begin{equation}\label{1.1}
		S_p(M)
		\geq
		\left(\frac{p-1}{n-p}\right)^{p-1}C_p(M)
		\geq
		n\omega_n^{\frac{p}{n}}
		V(M)^{\frac{n-p}{n}},
	\end{equation}
	\textit{ and}
	\begin{equation}\label{1.2}
		S_p(M)
		\geq
		\Phi_p(M)
		\geq
		n\omega_n^{\frac{p}{n}}
		V(M)^{\frac{n-p}{n}}.
	\end{equation}
	\textit{Equality in \eqref{1.1} holds if \(M\) is a ball centered at the
	origin. Equality in the right-hand inequality of \eqref{1.2} holds
	for \(p>1\) if and only if \(M\) is an ellipsoid centered at the
	origin, and for \(p=1\) if and only if \(M\) is an ellipsoid
	containing the origin.}

	Taking \(p=1\) in inequality~\eqref{1.1} yields
	\begin{equation}\label{1.3}
		S_1(M)
		\geq
		C_1(M)
		\geq
		n\omega_n^{1/n}V(M)^{(n-1)/n}.
	\end{equation}
For every compact set \(E\subset\mathbb{R}^n\) with rectifiable
boundary, inequality~\eqref{1.3} holds, with equality when \(E\) is a closed
ball in \(\mathbb{R}^n\) (see \cite[p.~105]{Mazya}). Moreover, as shown in \cite[Section~6.1]{LXYZ}, every convex body
\(E\) satisfies
\begin{equation*}\label{1.4}
	S_1(E)=C_1(E).
\end{equation*}

Taking \(p=1\) in inequality~\eqref{1.2} yields
\begin{equation}\label{1.5}
	S_1(E)
	\geq
	\Phi_1(E)
	\geq
	n\omega_n^{1/n}V(E)^{(n-1)/n}.
\end{equation}
If \(E\subset\mathbb{R}^n\) has Lipschitz boundary, then equality in
the second inequality of~\eqref{1.5} holds if and only if \(E\) is an ellipsoid. For a convex body \(E\), the second inequality in~\eqref{1.5}
coincides with the classical Petty projection inequality; see, for
example,
\cite{GardnerGT,LutwakSelected,PettyIsoperimetric,Schneider}. Subsequently,
Zhang \cite{ZhangAffine} extended this inequality to
compact domains with piecewise \(C^1\) boundary. 

The second inequality in \eqref{1.1} is due to Maz'ya \cite{MazyaSobolevq}, and its
special case \(p=2\) and \(n=3\) was established by
P\'olya and Szeg\"{o} \cite{PolyaSzego}. In \cite{PolyaSzego}, they also proved the first
inequality in \eqref{1.1} for \(p=2\) and \(n=3\). For compact convex sets,
the second inequality in \eqref{1.2} was obtained in \cite{LYZ}, and an alternative
proof can be found in \cite{CampiGronchi}.

For a Lipschitz star body, the first inequality
in~\eqref{1.1} is obtained by choosing suitable test functions; see
\cite[Section~6]{LXYZ}. In contrast, the first inequality
in~\eqref{1.2} follows immediately from Jensen's inequality. The
second inequality in~\eqref{1.2} is proved using the \(L_p\) convexification of star
bodies, which is defined via the \(L_p\) convexification of level sets;
see \cite[Section~4.2]{LXYZ}.

The geometric inequalities in the preceding theorem provide a key
ingredient in the proof of the sharp convex Lorentz-Sobolev
inequalities established by Ludwig-Xiao-Zhang \cite{LXYZ}. Subsequent work established several extensions of the convex
Lorentz-Sobolev inequalities and introduced relative, general, and
higher-order affine capacities; see
\cite{Ober,FangXuZhouZhu,LiZhu,YeLuHuang,WangWang,HeLi,
	Xiao,XiaoZhang,XiaoRedux,HongYe,ZhouYe}.

Since
\[
S_1(K)=C_1(K)
\ \ \text{for all } K\in\mathcal{K}_o^n,
\]
it follows from \eqref{1.5} that
\[
\Phi_1(K)\leq C_1(K)
\ \ \text{for all } K\in\mathcal{K}_o^n,
\]
and
\[
\int_0^\infty \Phi\bigl(\langle f\rangle_t\bigr)\,dt
\leq
\int_0^\infty C\bigl(\langle f\rangle_t\bigr)\,dt
\ \ \text{for all } f\in C_0^\infty(\mathbb{R}^n).
\]
Here, \(f\colon U\to\mathbb{R}\) is locally Lipschitz on an open set
\(U\subset\mathbb{R}^n\), \(t>0\), and \(\langle f\rangle_t\) denotes
the \(L_p\) convexification of the level set \([f]_t=\{x\in U:|f(x)|\geq t\}\); see Section~\ref{sec:preliminaries}
for its precise definition.

Motivated by these observations, Ludwig-Xiao-Zhang \cite{LXYZ} posed the following question.

\noindent\textbf{Question.}\
\textit{Are the geometric inequality}
\begin{equation}\label{1.6}
\Phi_p(K)
\leq
\left(\frac{p-1}{n-p}\right)^{p-1}C_p(K)
\ \ \textit{for all }K\in\mathcal{K}_0^n
\end{equation}
\textit{and the analytic inequality}
\begin{equation}\label{1.7}
\int_0^\infty
\Phi_p\bigl(\langle f\rangle_t\bigr)\,dt
\leq
\left(\frac{p-1}{n-p}\right)^{p-1}
\int_0^\infty
C_p\bigl(\langle f\rangle_t\bigr)\,dt
\ \ \textit{for all }f\in C_0^\infty(\mathbb{R}^n)
\end{equation}
\textit{true for \(1<p<n\)}?

Our main result gives a negative answer to the question posed above,
showing that both proposed inequalities fail for all admissible values
of \(n\) and \(p\).

\begin{theorem}\label{thm:main}
		For every integer \(n\geq2\) and every \(1<p<n\), the geometric
		inequality~\eqref{1.6} and the analytic inequality~\eqref{1.7} both
		fail. More precisely:
		\begin{enumerate}
			\renewcommand{\labelenumi}{\textup{(\roman{enumi})}}
			
			\item there exists an origin-symmetric convex body
			\(K\in\mathcal{K}_o^n\) with \(C^\infty\) boundary and everywhere
			positive Gauss curvature such that
			\[
			\Phi_p(K)>\left(\frac{p-1}{n-p}\right)^{p-1}C_p(K);
			\]
			
			\item there exists a nonnegative function
			\(f\in C_c^\infty(\mathbb{R}^n)\) such that
			\[
			\int_0^\infty
			\Phi_p\bigl(\langle f\rangle_t\bigr)\,dt
			>
			\left(\frac{p-1}{n-p}\right)^{p-1}
			\int_0^\infty
			C_p\bigl(\langle f\rangle_t\bigr)\,dt.
			\]
		\end{enumerate}
	\end{theorem}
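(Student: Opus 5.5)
The plan is to construct \(K\) as a small smooth perturbation of the unit ball in a high-frequency direction and compare second-order expansions. At the ball we have \(\Phi_p(B^n)=\gamma_{n,p}C_p(B^n)=n\omega_n\). Both sides of \eqref{1.6} are positively homogeneous of degree \(n-p\), so the inequality is scale invariant. Two natural alternatives do not work, which is why I choose this route.

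\begin{itemize}
\item Degenerate bodies are ruled out by affine invariance. Since \(\Phi_p\) is \(SL(n)\)-invariant, thin slabs or needles give \(\Phi_p\asymp V^{(n-p)/n}\). Needles do produce a counterexample when \(p\le n-1\), because \(C_p\) of a segment vanishes, but this fails for \(n-1<p<n\), in particular for \(n=2\).
\item Ellipsoidal perturbations are ruled out by the sharp inequalities. By \eqref{1.2} and the Pólya–Szegő principle, \(\Phi_p\) equals the volume term on ellipsoids while \(C_p\) strictly exceeds it, so these directions go the wrong way.
\end{itemize}

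\textbf{Setup.} Fix a real spherical harmonic \(Y_k\) of even degree \(k\) and let \(K_\varepsilon\) have radial function \(\rho_\varepsilon=1+\varepsilon Y_k\). Even degree gives origin symmetry. For small \(\varepsilon\) the body is \(C^\infty\) with positive Gauss curvature, and scaling normalizes the volume. I will expand
\[
\Phi_p(K_\varepsilon)=n\omega_n+\varepsilon^2 A_k\|Y_k\|_2^2+O(\varepsilon^3),\qquad
\gamma_{n,p}C_p(K_\varepsilon)=n\omega_n+\varepsilon^2 B_k\|Y_k\|_2^2+O(\varepsilon^3).
\]
The first-order terms vanish for \(k\ge1\) after volume normalization.

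\textbf{Capacity expansion.} For \(C_p\) I use the \(p\)-capacitary potential of the ball, \(u_0=|x|^{(p-n)/(p-1)}\), and domain variation. The second variation reduces to the quadratic form of the linearized \(p\)-Laplacian in the exterior of the ball, which is diagonalized by spherical harmonics. The relevant exterior solutions are \(r^{-\mu_k}Y_k\), where \(\mu_k\) solves a quadratic with \(\mu_k\sim c\,k\). Hence \(B_k\) grows at most linearly in \(k\), like a Dirichlet-to-Neumann eigenvalue. Alternatively, an upper bound suffices: plugging the transplanted test function \(u_0(x/\rho_\varepsilon(x/|x|))\) into the definition of \(C_p\) gives \(B_k\le c_1k+c_2\), and only an upper bound on \(C_p\) is needed.

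\textbf{Lower bound for \(\Phi_p\).} For each \(u\), write \(V_p(K_\varepsilon,\bar u)\) as an integral over \(\mathbb{S}^{n-1}\). The factor \(|x\cdot\nu|^{1-p}\) times the Jacobian expands as
\[
1+\varepsilon(\dots)+\varepsilon^2\Bigl(\tfrac{p}{2}|\nabla Y_k|^2+\dots\Bigr),
\]
since \(x\cdot\nu=\rho^2/\sqrt{\rho^2+|\nabla\rho|^2}\). The weight \(h_{\bar u}(\nu)^p\) is also perturbed. Averaging \(V_p^{-n/p}\) over \(u\) then loses a term \(-\tfrac{n+p}{2p}\operatorname{Var}_u\) of the first variation. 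That first variation is a zonal convolution of \(Y_k\) against a \(|u\cdot v|^p\)-type kernel, and its multiplier decays in \(k\). Consequently the leading term is
\[
A_k\ge c_p\,\lambda_k-O(k),\qquad \lambda_k=k(k+n-2),
\]
with \(c_p>0\) coming from the \(|\nabla Y_k|^2\) contribution.

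\textbf{Conclusion of (i).} Choosing \(k\) large gives \(A_k>B_k\). Then for small \(\varepsilon\) we get \(\Phi_p(K_\varepsilon)>\gamma_{n,p}C_p(K_\varepsilon)\).

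\textbf{Part (ii).} Take \(f(x)=\varphi(\|x\|_K)\), with \(\varphi\) smooth, decreasing, and constant near \(0\), with compact support. Mollify near the origin if necessary to keep \(f\in C_c^\infty\). The level sets \([f]_t\) are dilates \(s(t)K\) of the convex body \(K\). I must check from the definition in Section~\ref{sec:preliminaries} that \(L_p\) convexification leaves a convex level set unchanged, so \(\langle f\rangle_t=s(t)K\). By homogeneity of degree \(n-p\),
\[
\int_0^\infty\Phi_p(\langle f\rangle_t)\,dt=\Phi_p(K)\int_0^\infty s(t)^{n-p}\,dt,
\]
and the same holds for \(C_p\). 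So (ii) follows from (i).

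\textbf{Main obstacle.} The hardest step is the precise second-order expansion of \(\Phi_p\). Specifically, I need to show that the negative Jensen/variance term from averaging over \(u\) does not cancel the \(k^2\) growth. If that step resists, I will first try the case \(p=2\), where the \(|u\cdot v|^2\) kernel makes \(V_2(K,\bar u)\) a quadratic form in \(u\). Its first variation is then supported only in degree \(\le2\) and vanishes for \(k\ge4\), so there is no variance loss.
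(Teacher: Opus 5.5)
\textbf{Gap: the capacity upper bound.} Your architecture is the paper's: a high-degree even spherical-harmonic perturbation of the ball. Along it, $\Phi_p$ tracks $S_p$, whose volume-normalized second-order coefficient is $\frac p2(\lambda_k-(n-p))\|Y_k\|_2^2$, up to a variance loss governed by the $p$-cosine multiplier, while the capacity gains only $O(k)$. The gap is the capacity upper bound, which is the crux, and the rigorous shortcut you name fails. Put $\mu=(n-p)/(p-1)$, so that $u_0=|x|^{-\mu}$. Then the transplanted function $u_0(x/\rho_\varepsilon(x/|x|))$ is exactly $r_{K_\varepsilon}(x)^{-\mu}$. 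Use the coarea formula, $|\nabla r_K|=(x\cdot\nu_K)^{-1}$ on $\partial K$, and $\int_1^\infty s^{n-1-p(\mu+1)}\,ds=1/\mu$. This shows its energy is exactly $\mu^{p-1}S_p(K_\varepsilon)$. So it only reproduces $\gamma_{n,p}C_p\le S_p$, the first inequality in \eqref{1.1}. That gives $B_k\le\frac p2(\lambda_k-(n-p))$, which is quadratic in $k$. Since $\Phi_p\le S_p$ always, this competitor can never yield a counterexample. For $n=3$, $p=2$ you can check it directly: the true normalized coefficient is $k-1$, while the transplanted one is $k^2+k-1$.

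Your domain-variation heuristic is the right one, but as written it is not a proof: second-order shape differentiability of the true $p$-capacity is nontrivial for $p\neq2$. Since only an upper bound is needed, the missing idea is to build the exterior mode into an explicit competitor. In coordinates $x=\rho_\varepsilon(\theta)t\theta$, $t\ge1$, the paper uses
\[
F_*(t,\theta)=t^{-\mu}+\varepsilon\mu Y_k(\theta)\bigl(t^{-s_k}-t^{-\mu}\bigr),\qquad (p-1)s_k(s_k-\mu)=\lambda_k,
\]
which still equals $1$ on $\partial K_\varepsilon$. A second-order expansion of the exact energy shows the energy of $F_*$ lies below $\mu^{p-1}S_p(K_\varepsilon)$ by $\frac p2\mu^{p-1}(p-1)(s_k-\mu)^2\varepsilon^2\|Y_k\|_2^2+o(\varepsilon^2)$; the cross term with $\rho_\varepsilon^{n-p}$ cancels because $(p-1)\mu=n-p$. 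Since $(p-1)(s_k-\mu)^2=\lambda_k-(n-p)(s_k-\mu)$, the $\lambda_k$ growth is removed exactly. This leaves $B_k\le\frac p2(n-p)(s_k-\mu-1)=O(k)$, which is what you need.

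\textbf{Further corrections.} In part (ii), $\langle f\rangle_t$ is not the level set $s(t)K$. The definition \eqref{eq:convexification-definition} weights $\partial[f]_t$ by $|\nabla f|^{p-1}$. For $f=\varphi(r_K)$ with $t=\varphi(s)$, uniqueness in the even $L_p$ Minkowski problem gives $\langle f\rangle_t=c(t)K$ with $c(t)^{n-p}=|\varphi'(s)|^{p-1}s^{n-1}$. Your conclusion still holds, because both integrals then equal $\Psi(K)\int|\varphi'(s)|^p s^{n-1}\,ds$.

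In the $\Phi_p$ step, ``the multiplier decays'' must be quantified. The first variation of the $L_p$ surface density is $(n-p-\lambda_k)Y_k$, so you need $a_{p,k}=o(k^{-1})$. It is in fact $O(k^{-p-n/2})$, and zero for even integer $p<k$.

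Finally, your needle remark is backwards, though you do not use it. $\Phi_p(K)/V(K)^{(n-p)/n}$ is affine invariant, while $\gamma_{n,p}C_p(K)/V(K)^{(n-p)/n}\ge n\omega_n^{p/n}$ and this ratio blows up along needles. So needles never give counterexamples, for any $p$.
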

	
	Part (ii) relies on the geometric construction established in part (i). In fact, a smooth cutoff applied to its Minkowski functional allows the strict geometric inequality to be carried over exactly to the associated 
	\(L_p\)-convexified level sets.
	
	\textit{Organization of the paper.}
	Section~\ref{sec:preliminaries} collects the notation and preliminary results used throughout the paper. In Section~\ref{sec:translation}, high-degree even spherical-harmonic perturbations of the Euclidean unit ball are used to construct smooth, origin-symmetric convex bodies with everywhere positive Gauss curvature that do not satisfy the proposed geometric inequality, thereby proving part~\textup{(i)} of Theorem~\ref{thm:main}. Section~\ref{sec:analytic} derives the functional construction from their geometric counterparts by applying a smooth cutoff to the Minkowski functional and using the exact dilation identity for \(L_p\)-convexified level sets, thereby proving part~\textup{(ii)}. Finally, the appendix presents a concrete counterexample for \((n,p)=(3,2)\).
	
	\section{Preliminaries}
	\label{sec:preliminaries}
	
	In this section, we collect some basic facts and notation. For further background, we refer the reader to the monographs by Schneider \cite{Schneider} and Gardner \cite{GardnerGT} and the paper \cite{LXYZ}.
	
	Let \(d\sigma\) denote the standard surface measure on
	\(\mathbb{S}^{n-1}\), and write
	\[
	M:=\sigma(\mathbb{S}^{n-1})
	=\lvert\mathbb{S}^{n-1}\rvert
	=n\omega_n.
	\]
	For \(q\in L^1(\mathbb{S}^{n-1})\), define its normalized spherical
	average by
	\[
	\langle q\rangle_\sigma
	:=
	\frac{1}{M}
	\int_{\mathbb{S}^{n-1}}q\,d\sigma.
	\]
	
Let \(K\in\mathcal{K}_o^n\). Its Minkowski functional is
	defined by
	\[
	r_K(x):=\inf\{s>0:x\in sK\},
	\ \ x\in\mathbb{R}^n.
	\]
	For every \(u\in\mathbb{S}^{n-1}\), we have
	\[
	r_K\bigl(\rho_K(u)u\bigr)=1.
	\]
	Moreover,
	\[
	r_K(x)=\frac{1}{\rho_K(x)},
	\ \
	x\in\mathbb{R}^n\setminus\{0\}.
	\]

For \(K,L\in\mathcal{K}_o^n\), Lutwak
\cite{LutwakFirey} established the representation
\begin{equation*}\label{2.1}
	V_p(K,L)
	=
	\frac{1}{n}
	\int_{\mathbb{S}^{n-1}}
	h_L(u)^p\,dS_p(K,u),
\end{equation*}
where \(S_p(K,\cdot)\) denotes the \(L_p\)-surface area measure of
\(K\), defined by
\[
dS_p(K,u)
=
h_K(u)^{1-p}\,dS_K(u).
\]
Here \(S_K\) is the classical surface area measure of \(K\); see
\cite{Schneider}. Equivalently, \(S_K\) is the Borel measure on
\(\mathbb{S}^{n-1}\) characterized by
\begin{equation*}\label{2.2}
	\int_{\mathbb{S}^{n-1}}g(u)\,dS_K(u)
	=
	\int_{\partial K}
	g\bigl(\nu_K(x)\bigr)\,
	d\mathcal{H}^{n-1}(x)
\end{equation*}
for every \(g\in C(\mathbb{S}^{n-1})\).
	
Let \(K\in\mathcal{K}_o^n\). For each
\(u\in\Sph^{n-1}\), denote by
\(
\bar{u}=[-u/2,u/2]
\)
the centered line segment in the direction \(u\). Since
\[
h_{\bar{u}}(v)=\frac{|u\cdot v|}{2}, \ \ v\in \Sph^{n-1},
\]
the \(L_p\)-mixed volume of \(K\) and \(\bar{u}\) is given by
\begin{equation}\label{eq:Vp}
	V_p(K,\bar{u})
	=
	\frac{1}{n}
	\int_{\Sph^{n-1}}
	\left(\frac{|u\cdot v|}{2}\right)^p
	\,dS_p(K,v).
\end{equation}
Let
\[
\beta_{n,p}=V_p(B^n,\bar{u}).
\]
By rotational invariance, \(\beta_{n,p}\) is independent of the choice
of \(u\). With the normalization adopted in \cite{LXYZ}, we have
\begin{align}
	S_p(K)
	&=
	\frac{1}{\beta_{n,p}}
	\int_{\Sph^{n-1}}
	V_p(K,\bar{u})\,d\sigma(u),
	\label{eq:Sp-average}\\
	\Phi_p(K)
	&=
	\frac{M^{(n+p)/n}}{\beta_{n,p}}
	\left(
	\int_{\Sph^{n-1}}
	V_p(K,\bar{u})^{-n/p}\,d\sigma(u)
	\right)^{-p/n}.
	\label{eq:Phi}
\end{align}
	In particular,
\begin{equation*}\label{eq:ball-Phi}
	\Phi_p(B^n)=S_p(B^n)=M.
\end{equation*}

Define the normalized \(p\)-cosine transform of an
integrable function \(q\) on \(\Sph^{n-1}\) by
\begin{equation*}\label{eq:cosine-transform}
	(\mathcal{T}_pq)(u)
	:=
	\frac{
		\displaystyle
		\int_{\Sph^{n-1}}
		|u\cdot v|^p q(v)\,d\sigma(v)
	}{
		\displaystyle
		\int_{\Sph^{n-1}}
		|e_1\cdot v|^p\,d\sigma(v)
	}.
\end{equation*}
This normalization ensures that
\[
\mathcal{T}_p1=1,
\qquad
\avg{\mathcal{T}_pq}=\avg{q}.
\]
For a smooth strictly convex body \(K\), let
\[
dS_p(K,u)=q_K(u)\,d\sigma(u)
\]
and
\[
v_K(u)=\frac{V_p(K,\bar{u})}{\beta_{n,p}}.
\]
Then
\begin{equation}\label{eq:normalized-functionals}
	v_K=\mathcal{T}_pq_K,
	\qquad
	\frac{S_p(K)}{M}=\avg{v_K},
	\ \
	\frac{\Phi_p(K)}{M}
	=
	\left\langle
	v_K^{-n/p}
	\right\rangle_{\!\sigma}^{-p/n}.
\end{equation}	
	
For \(1\leq p<n\) and a compact set \(E\subset\mathbb{R}^n\), define
its \(L_p\)-variational capacity by
\begin{equation}\label{2.9}
C_p(E)
=
\inf\left\{
\int_{\mathbb{R}^n}|\nabla\varphi|^p\,dx:
\varphi\in C_c^\infty(\mathbb{R}^n),\
\varphi\geq1\ \text{on }E
\right\}.
\end{equation}	
For the unit
ball \(B^n\),
\[
C_p(B^n)
=
\begin{cases}
	M, & p=1,\\[4pt]
	M
	\left(\dfrac{n-p}{p-1}\right)^{p-1},
	& 1<p<n.
\end{cases}
\]
The
\(L_p\)-variational capacity is monotone with respect to set
inclusion and invariant under translations. 
	Both $\Phi_p$ and $\pcap_p$ are homogeneous of degree $n-p$, and hence
	\begin{equation*}\label{eq:normalization}
		\gammaconst\,\pcap_p(B^n)=\Phi_p(B^n)=M.
	\end{equation*}

	Let \(f\in C_0^\infty(\R^n)\), and define its superlevel sets by
	\[
	[f]_t
	=
	\left\{
	x\in\R^n:
	\lvert f(x)\rvert\geq t
	\right\}.
	\]
	For almost every \(t>0\), both \(t\) and \(-t\) are regular values of
	\(f\). For such \(t\) with \([f]_t\neq\varnothing\), the outer
	unit normal to \([f]_t\) is
	\[
	\nu_f(x)
	=
	-\frac{\nabla\lvert f\rvert(x)}
	{\lvert\nabla f(x)\rvert}
	=
	-\operatorname{sgn}\bigl(f(x)\bigr)
	\frac{\nabla f(x)}
	{\lvert\nabla f(x)\rvert},
	\ \
	x\in\partial[f]_t.
	\]
	The \(L_p\) convexification \(\convlev{f}{t}\) is defined as the unique
	origin-symmetric convex body whose \(L_p\)-surface area measure
	satisfies
	\begin{equation}\label{eq:convexification-definition}
		\int_{\Sph^{n-1}}
		\phi(u)\,
		dS_p\bigl(\convlev{f}{t},u\bigr)
		=
		\int_{\partial[f]_t}
		\phi\bigl(\nu_f(x)\bigr)
		\lvert\nabla f(x)\rvert^{p-1}
		\,d\Haus^{n-1}(x)
	\end{equation}
	for every even function
	\(\phi\in C(\Sph^{n-1})\); see
	\cite[Section~4.1]{LXYZ}.

	\section{Proof of (\textnormal{i}) in Theorem \ref{thm:main}}
	\label{sec:translation}
		
	Fix an integer \(n\geq2\) and a real number \(p\in(1,n)\), and take
	\begin{equation*}\label{3.1}
		k=n-p,
		\ \
		\mu=\frac{k}{p-1}.
	\end{equation*}
	Choose a nonzero real spherical harmonic \(Y_\ell\) of even degree
	\(\ell\geq2\). It satisfies
	\begin{equation*}\label{3.2}
		-\Delta_{\mathbb{S}^{n-1}}Y_\ell
		=
		\lambda_\ell Y_\ell \ \ \text{where} \ \ 
		\lambda_\ell:=\ell(\ell+n-2).
	\end{equation*}
	For an even integer \(\ell\geq 2\) and sufficiently small
	\(\lvert\varepsilon\rvert\), let
	\begin{equation*}\label{3.3}
		h_\varepsilon=1+\varepsilon Y_\ell.
	\end{equation*}
	Then \(h_\varepsilon>0\) and
	\[
	\nabla_{\mathbb{S}^{n-1}}^2h_\varepsilon+h_\varepsilon I
	>0
	\]
	on \(\mathbb{S}^{n-1}\). Thus, \(h_\varepsilon\) is the support
	function of a \(C^\infty\) convex body \(K_\varepsilon\) with everywhere
	positive Gauss curvature. Moreover, since \(Y_\ell\) is even, we have
	\(
	h_\varepsilon(-u)=h_\varepsilon(u).
	\)
	Hence, \(K_\varepsilon\) is origin-symmetric.
	
	The following lemma gives the second-order expansion of
	\(S_p(K_\varepsilon)-\Phi_p(K_\varepsilon)\).

	\begin{lemma}\label{lem:affine-deficit}
	Let \(\ell=2j\geq2\) be a fixed even integer. Define
		\begin{equation}\label{eq:cosine-eigenvalue}
			a_{p,2j}
			=
			\prod_{r=0}^{j-1}
			\frac{p-2r}{n+p+2r},
		\ \
			A_\ell
			=
			(k-\lambda_\ell)a_{p,\ell}.
		\end{equation}
	Then as \(\varepsilon\to0\),
		\begin{equation}\label{eq:affine-deficit}
			S_p(K_\varepsilon)-\Phi_p(K_\varepsilon)
			=
			\frac{n+p}{2p}A_\ell^2\varepsilon^2
			\int_{\mathbb{S}^{n-1}}Y_\ell^2\,d\sigma
			+o(\varepsilon^2).
		\end{equation}
	\end{lemma}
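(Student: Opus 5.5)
The plan is a direct second-order perturbation expansion based on the normalized representation \eqref{eq:normalized-functionals}. Since \(h_\varepsilon\) is smooth and strictly convex, \(dS_p(K_\varepsilon,\cdot)=q_\varepsilon\,d\sigma\) with
\[
q_\varepsilon=h_\varepsilon^{1-p}\det\bigl(\nabla^2_{\Sph^{n-1}}h_\varepsilon+h_\varepsilon I\bigr).
\]
First I will expand \(q_\varepsilon\) to first order in \(\varepsilon\). We have \(\det(I+\varepsilon(\nabla^2Y_\ell+Y_\ell I))=1+\varepsilon\bigl(\Delta_{\Sph^{n-1}}Y_\ell+(n-1)Y_\ell\bigr)+O(\varepsilon^2)=1+\varepsilon(n-1-\lambda_\ell)Y_\ell+O(\varepsilon^2)\) and \(h_\varepsilon^{1-p}=1+(1-p)\varepsilon Y_\ell+O(\varepsilon^2)\). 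Multiplying gives
\[
q_\varepsilon=1+\varepsilon(k-\lambda_\ell)Y_\ell+\varepsilon^2 R_\varepsilon,
\]
where \(R_\varepsilon\) is bounded in \(C(\Sph^{n-1})\) uniformly in small \(\varepsilon\), because everything is polynomial in \(\varepsilon\) with smooth coefficients and \(h_\varepsilon\) is bounded away from \(0\).

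Next I apply \(\mathcal{T}_p\). By the Funk--Hecke theorem, \(Y_\ell\) is an eigenfunction of \(\mathcal{T}_p\), and its eigenvalue is the normalized Funk--Hecke coefficient of \(t\mapsto|t|^p\). I will verify, or quote from the standard computation via Gegenbauer polynomials or Rodrigues' formula and Beta integrals, that this coefficient equals \(a_{p,\ell}=\prod_{r=0}^{j-1}\frac{p-2r}{n+p+2r}\). This identification is the step I expect to need the most care: one must check that the normalization \(\mathcal{T}_p1=1\) matches the product formula. I would check it by induction on \(j\), using the recursion obtained by integrating by parts in the Funk--Hecke integral.

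Granting this, \(v_\varepsilon:=v_{K_\varepsilon}=1+\eta\) with
\[
\eta=\varepsilon A_\ell Y_\ell+\varepsilon^2\mathcal{T}_pR_\varepsilon ,
\]
and \(\mathcal{T}_pR_\varepsilon\) is uniformly bounded. Since \(\avg{Y_\ell}=0\), we get \(\avg{\eta}=O(\varepsilon^2)\) and \(\avg{\eta^2}=\varepsilon^2A_\ell^2\avg{Y_\ell^2}+O(\varepsilon^3)\).

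Finally I expand both functionals in \eqref{eq:normalized-functionals}. For the surface area, \(S_p/M=1+\avg{\eta}\) exactly. For the affine functional, Taylor expansion gives
\[
\avg{v_\varepsilon^{-n/p}}=1+X,\qquad X=-\tfrac np\avg{\eta}+\tfrac{n(n+p)}{2p^2}\avg{\eta^2}+O(\varepsilon^3).
\]
Since \(X=O(\varepsilon^2)\), we have \((1+X)^{-p/n}=1-\tfrac pnX+O(\varepsilon^4)\). Hence
\[
\Phi_p/M=1+\avg{\eta}-\tfrac{n+p}{2p}\avg{\eta^2}+O(\varepsilon^3).
\]
Subtracting, the uncontrolled second-order term \(\avg{\eta}\), which contains \(\avg{\mathcal{T}_pR_\varepsilon}\), cancels exactly. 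We are left with
\[
S_p-\Phi_p=M\,\tfrac{n+p}{2p}A_\ell^2\varepsilon^2\avg{Y_\ell^2}+o(\varepsilon^2),
\]
and \(M\avg{Y_\ell^2}=\int_{\Sph^{n-1}}Y_\ell^2\,d\sigma\), which is \eqref{eq:affine-deficit}.
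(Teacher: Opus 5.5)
Your proposal is correct and follows essentially the same route as the paper. Both arguments expand $q_\varepsilon$, use the Funk--Hecke eigenvalue $a_{p,\ell}$ to obtain $v_{K_\varepsilon}=1+\varepsilon A_\ell Y_\ell+O(\varepsilon^2)$, and then observe that the uncontrolled second-order mean $\avg{\mathcal{T}_pR}$ cancels between $S_p/M$ and $\Phi_p/M$. The only difference is cosmetic: you keep the $\varepsilon^2$ term of $q_\varepsilon$ as a uniformly bounded remainder instead of writing out the paper's explicit $R_0$. That suffices precisely because this term cancels, and it yields the same $O(\varepsilon^3)$ error.
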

	
	\begin{proof}
		
			Take
			\[
			H
			=
			\nabla_{\mathbb{S}^{n-1}}^2Y_\ell+Y_\ell I,
			\ \
			\tau
			=
			\operatorname{tr}H
			=
			(n-1-\lambda_\ell)Y_\ell.
			\]
			Since \(Y_\ell\) and \(H\) are bounded on
			\(\mathbb{S}^{n-1}\), there exists \(\varepsilon_0>0\) such that whenever \(\lvert\varepsilon\rvert<\varepsilon_0\),
			\[
			h_\varepsilon\geq\frac12,
			\ \
			I+\varepsilon H\geq \frac12 I
			\quad\text{on }\mathbb{S}^{n-1}.
			\]
	It follows that all expansions below are uniform for
		\(u\in\mathbb{S}^{n-1}\). Using the Gauss parametrization, the
		\(L_p\) surface area measure of \(K_\varepsilon\) can be written as
		\[
		dS_p(K_\varepsilon,u)
		=
		q_\varepsilon(u)\,d\sigma(u),
		\ \ 
		q_\varepsilon(u)
		:=
		h_\varepsilon(u)^{1-p}
		\det\bigl(I+\varepsilon H(u)\bigr).
		\]
Using the expansions
		\begin{align*}
			h_\varepsilon^{1-p}
			&=1+(1-p)\varepsilon Y_\ell
			+\frac{p(p-1)}2\varepsilon^2Y_\ell^2
			+O_{C^0}(\varepsilon^3),
		\end{align*}
		and
	 \begin{align*}
			\det(I+\varepsilon H)
			&=1+\varepsilon\tau
			+\frac{\varepsilon^2}{2}
			\bigl(\tau^2-\operatorname{tr}(H^2)\bigr)
			+O_{C^0}(\varepsilon^3),
		\end{align*}
		we obtain
		\begin{equation}\label{eq:surface-density-first}
			q_\varepsilon
			=1+\varepsilon(k-\lambda_\ell)Y_\ell
			+\varepsilon^2R_0+O_{C^0}(\varepsilon^3),
		\end{equation}
		where
		\[
		R_0=
		\frac12\bigl(\tau^2-\operatorname{tr}(H^2)\bigr)
		+(1-p)Y_\ell\tau+\frac{p(p-1)}2Y_\ell^2.
		\]
		
		The Funk-Hecke formula \cite[Chapter~3]{Groemer}
		(or a direct Fourier computation when \(n=2\)) shows that
		\(Y_{2j}\) is an eigenfunction of \(\mathcal{T}_p\) with eigenvalue
		\(a_{p,2j}\). That is,
		\begin{equation*}\label{eq:Funk-Hecke}
			\mathcal{T}_pY_{2j}
			=
			a_{p,2j}Y_{2j}.
		\end{equation*}
		For \(j\geq0\), a direct computation of the corresponding beta integrals gives
		\[
		a_{p,0}=1,
	\ \
		a_{p,2j+2}
		=
		\frac{p-2j}{n+p+2j}\,a_{p,2j}.
		\]
		Repeated application of this recurrence yields the product formula
		in \eqref{eq:cosine-eigenvalue}. In particular,
		\[
		a_{p,2}=\frac{p}{n+p}.
		\]
		Applying \(\mathcal{T}_p\) to the expansion in
		\eqref{eq:surface-density-first}, and using the boundedness of
		\(
		\mathcal{T}_p:
		C^0(\Sph^{n-1})\longrightarrow C^0(\Sph^{n-1}),
		\)
		we obtain
		\[
		v_{K_\varepsilon}
		=
		1+\varepsilon A_\ell Y_\ell
		+\varepsilon^2B_0
		+O_{C^0}(\varepsilon^3),
		\ \
		B_0:=\mathcal{T}_pR_0.
		\]
		Since \(\mathcal{T}_p\) preserves normalized spherical averages, we have
		\[
		\avg{B_0}=\avg{R_0}.
		\]
		Moreover, \(\avg{Y_\ell}=0\). Therefore, the formula for \(S_p\) in
		\eqref{eq:normalized-functionals} gives
		\[
		\frac{S_p(K_\varepsilon)}{M}
		=
		1+\varepsilon^2\avg{B_0}
		+O(\varepsilon^3).
		\]
		Let $\alpha=n/p$.
		Taylor expanding \(v_{K_\varepsilon}^{-\alpha}\), and applying the
		power \(-1/\alpha\) to its spherical average as in
		\eqref{eq:normalized-functionals}, we obtain
		\begin{align*}
			\avg{v_{K_\varepsilon}^{-\alpha}}
			&=
			1-\alpha\varepsilon^2\avg{B_0}
			+\frac{\alpha(\alpha+1)}{2}
			A_\ell^2\varepsilon^2\avg{Y_\ell^2}
			+O(\varepsilon^3),
		\end{align*}
		and
		\begin{align*}
			\frac{\Phi_p(K_\varepsilon)}{M}
			&=
			1+\varepsilon^2
			\left(
			\avg{B_0}
			-\frac{\alpha+1}{2}
			A_\ell^2\avg{Y_\ell^2}
			\right)
			+O(\varepsilon^3).
		\end{align*}
		Combining the second expansion with the preceding expansion of
		\(S_p(K_\varepsilon)/M\), and using
		\[
		\alpha+1=\frac{n+p}{p} \ \
		\text{and} \ \
		M\avg{Y_\ell^2}
		=
		\int_{\Sph^{n-1}}Y_\ell^2\,d\sigma,
		\]
		we give \eqref{eq:affine-deficit}, in fact with an $O(\varepsilon^3)$ remainder
		for fixed $\ell$.
	\end{proof}
	
	Let $\rho_\varepsilon$ be the radial function of $K_\varepsilon$.
	The support parametrization
	\[
	x(u)=h_\varepsilon(u)u+\nabla_{\Sph}h_\varepsilon(u)
	\]
	gives
	\begin{equation*}\label{eq:rho-first}
		\rho_\varepsilon
		=
		1+\varepsilon Y_\ell+O_{C^2}(\varepsilon^2) \ \
	\text{and} \ \
		b_\varepsilon=\nabla_{\Sph}\log\rho_\varepsilon
		=
		\varepsilon\nabla_{\Sph}Y_\ell+O_{C^1}(\varepsilon^2).
	\end{equation*}
	
We next establish the corresponding second-order lower bound for the
capacity deficit
\(
S_p(K_\varepsilon)-\gamma_{n,p}C_p(K_\varepsilon).
\)	
	\begin{lemma}\label{lem:capacity-deficit}
		Let $s_\ell>\mu$ be defined by
		\begin{equation}\label{eq:s-ell}
			(p-1)s_\ell(s_\ell-\mu)=\lambda_\ell,
			\ \
			s_\ell=
			\frac{\mu+\sqrt{\mu^2+4\lambda_\ell/(p-1)}}2.
		\end{equation}
		Then
		\begin{equation}\label{eq:capacity-deficit}
			S_p(K_\varepsilon)-\gammaconst\,\pcap_p(K_\varepsilon)
			\geq
			\frac{p(p-1)}2(s_\ell-\mu)^2\varepsilon^2
			\int_{\Sph^{n-1}}Y_\ell^2\,d\sigma
			+o(\varepsilon^2).
		\end{equation}
	\end{lemma}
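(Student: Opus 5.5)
The plan is to bound $\gammaconst\,\pcap_p(K_\varepsilon)$ from above by the energy of an explicit admissible test function, and to compare that energy with $S_p(K_\varepsilon)$. I will work in the star-shaped coordinates adapted to $K_\varepsilon$,
\[
x=t\,\rho_\varepsilon(\theta)\,\theta,\qquad t\ge 1,\ \theta\in\Sph^{n-1},
\]
so that $t=r_{K_\varepsilon}(x)$.

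\textbf{The baseline test function.} Take $\varphi_0=t^{-\mu}=r_{K_\varepsilon}^{-\mu}$ outside $K_\varepsilon$ and $\varphi_0=1$ on $K_\varepsilon$. A direct computation with the coarea formula along the level sets $\{t=\mathrm{const}\}=t\,\partial K_\varepsilon$ gives
\[
\int|\nabla\varphi_0|^p=\gammaconst^{-1}S_p(K_\varepsilon).
\]
The integrability needed at infinity is $p(\mu+1)>n$, which is equivalent to $p<n$. This is the same test function that yields the first inequality in \eqref{1.1}. Since $\pcap_p$ is defined through $C_c^\infty$ functions, I will note once that $\pcap_p(E)\le\int|\nabla\varphi|^p$ also holds for locally Lipschitz $\varphi\ge1$ on $E$ that decay like $|x|^{-\mu}$ with gradient in $L^p$. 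This follows by truncation, cutoff and mollification, and it costs nothing at order $\varepsilon^2$.

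\textbf{The improved test function.} Next consider
\[
\varphi=\varphi_0+\varepsilon c\,\eta,\qquad
\eta(t\rho_\varepsilon(\theta)\theta)=Y_\ell(\theta)\bigl(t^{-s_\ell}-t^{-\mu}\bigr),\qquad c\in\mathbb R.
\]
Since $\eta$ vanishes at $t=1$, $\varphi$ remains admissible. Hence
\[
\gammaconst\,\pcap_p(K_\varepsilon)\le \gammaconst E(\varphi_0+\varepsilon c\eta),\qquad E(\varphi):=\int|\nabla\varphi|^p .
\]
I expand this energy to second order in $\varepsilon$, writing $|\nabla\varphi|^2$ in the adapted coordinates via $b_\varepsilon=\nabla_{\Sph}\log\rho_\varepsilon=\varepsilon\nabla_{\Sph}Y_\ell+O(\varepsilon^2)$.

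The energy splits as
\[
E(\varphi_0)+\varepsilon c\,dE_{\varphi_0}[\eta]+\tfrac12\varepsilon^2c^2\,d^2E_{\varphi_0}[\eta,\eta]+O(\varepsilon^3).
\]
The first term is $\gammaconst^{-1}S_p(K_\varepsilon)$. For the linear term, $\varphi_0$ is $p$-harmonic (the fundamental solution) when $\varepsilon=0$, so $dE_{\varphi_0}[\eta]$ is itself $O(\varepsilon)$. Its leading part comes from the $O(\varepsilon)$ anisotropy $b_\varepsilon$ and from $\rho_\varepsilon-1=\varepsilon Y_\ell$ in the volume element. After integration in $t$ and over the sphere, it reduces to a multiple of $\int Y_\ell^2\,d\sigma$. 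In the quadratic term it suffices to set $\varepsilon=0$, so it is the linearized $p$-Laplacian energy about $|x|^{-\mu}$:
\[
\int\!\!\int\Bigl[(p-1)\,t^{-(\mu+1)(p-2)}(\partial_t\eta)^2+t^{-(\mu+1)(p-2)}t^{-2}|\nabla_{\Sph}\eta|^2\Bigr]t^{n-1}\,dt\,d\sigma,
\]
up to the factor $\mu^{p-2}p/2$. The separated profile $t^{-s_\ell}$ with $(p-1)s_\ell(s_\ell-\mu)=\lambda_\ell$ is exactly the decaying solution of the corresponding radial ODE; this is where \eqref{eq:s-ell} enters. All radial integrals are elementary powers of $t$ over $[1,\infty)$, so both coefficients can be computed in closed form in terms of $\mu$ and $s_\ell$.

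\textbf{Optimizing in $c$.} The result has the form
\[
\gammaconst E(\varphi)=S_p(K_\varepsilon)-\varepsilon^2\bigl(2\beta c-\alpha c^2\bigr)\int Y_\ell^2\,d\sigma+O(\varepsilon^3)
\]
with $\alpha>0$. Choosing $c=\beta/\alpha$ gives the gain $\beta^2/\alpha$. The claim is that this equals $\tfrac{p(p-1)}2(s_\ell-\mu)^2$, after simplifying with $\gammaconst\mu^{p-1}=1$ (since $\gammaconst=\mu^{1-p}$) and with \eqref{eq:s-ell}.

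\textbf{Main obstacle.} The main difficulty is the linear coefficient $\beta$. It requires a careful first-order expansion of $|\nabla\varphi_0|^{p-2}\nabla\varphi_0\cdot\nabla\eta$ in the non-orthogonal coordinates, including the Jacobian $\rho_\varepsilon^n$. It also requires an integration by parts on the sphere that converts $\nabla_{\Sph}Y_\ell\cdot\nabla_{\Sph}Y_\ell$ into $\lambda_\ell Y_\ell^2$. If the constant does not simplify cleanly, I would instead choose $\eta=Y_\ell(t^{-s_\ell}-t^{-\mu})$ with a more general radial profile $g(t)$, $g(1)=0$, and optimize over $g$. The Euler–Lagrange ODE then forces $g\propto t^{-s_\ell}$ up to the boundary adjustment. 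Any admissible choice already gives an inequality, so only a lower bound of the stated size is needed.
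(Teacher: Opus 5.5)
Your setup is the paper's: the adapted coordinates $x=t\rho_\varepsilon(\theta)\theta$, the baseline $t^{-\mu}$ with energy $\mu^{p-1}S_p(K_\varepsilon)$, and the perturbation direction $\eta=Y_\ell(t^{-s_\ell}-t^{-\mu})$. The paper fixes the amplitude at $c=\mu$ instead of optimizing over $c$. \textbf{The gap} is that the content of the lemma, namely the linear coefficient $\beta$ and hence $\beta^2/\alpha$, is only asserted (``the claim is that this equals \dots''). Your fallback of optimizing over a general profile $g$ does not close the gap. The Euler--Lagrange equation for $g$ has a source term determined by $\beta$, which you have not computed. Moreover, $g\propto t^{-s_\ell}$ is incompatible with $g(1)=0$.

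The missing observation is the one the paper uses to organize the expansion. Set $u=t^{-\mu}$ and $w_\mu=\mu t^{-\mu}Y_\ell$. In the adapted coordinates, the tangential part of $\nabla\varphi_0$ (after factoring out $\rho_\varepsilon^{-1}$) is
\[
-u'b_\varepsilon=\varepsilon\,t^{-1}\nabla_{\Sph}w_\mu+O(\varepsilon^2).
\]
The factor $\rho_\varepsilon^k=1+k\varepsilon Y_\ell+O(\varepsilon^2)$ produces the term $kY_\ell u'\eta_t=(p-1)(w_\mu)_t\eta_t$, because $k=(p-1)\mu$. Together these give $dE_{\varphi_0}[\eta]=p\,\varepsilon\,B_Q(w_\mu,\eta)+O(\varepsilon^2)$. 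Here $B_Q$ is the polarization of the quadratic form $Q$ with exactly the weight $t^{n-1}|u'|^{p-2}$ that you wrote down. Hence
\[
E(\varphi_0+\varepsilon c\eta)-E(\varphi_0)=\tfrac p2\,\varepsilon^2\bigl[Q(w_\mu+c\eta)-Q(w_\mu)\bigr]+o(\varepsilon^2),
\]
so the boundary deformation acts as a shift of the base point by $\varepsilon w_\mu$. The function $w_{s_\ell}=w_\mu+\mu\eta$ solves the Euler--Lagrange equation of $Q$ with the same trace $\mu Y_\ell$ at $t=1$; this is where \eqref{eq:s-ell} enters. Therefore $B_Q(w_{s_\ell},\eta)=0$, your optimal $c$ is exactly $\mu$, and the gain is
\[
\tfrac p2\bigl[Q(w_\mu)-Q(w_{s_\ell})\bigr]=\tfrac p2\,\mu^{p-1}(p-1)(s_\ell-\mu)^2\int_{\Sph^{n-1}}Y_\ell^2\,d\sigma .
\]
Multiplying by $\gammaconst=\mu^{1-p}$ gives the stated constant.

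You also need to justify the remainder, which you wrote as $O(\varepsilon^3)$ without comment. For $1<p<2$ the map $z\mapsto|z|^p$ is not $C^3$ at the origin. The paper handles this as follows. All perturbation fields are $O(|u'|)$, using $t^{\mu-s_\ell}\le1$. So the relevant segments stay at distance at least $|u'|/2$ from $0$, and the pointwise error is $O(\varepsilon^3|u'|^p)$. Since $\int_1^\infty t^{n-1}|u'|^p\,dt<\infty$, the error is integrable.
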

	
	\begin{proof}
		 For $\theta\in\Sph^{n-1}$ and $t\geq1,$ we parametrize the exterior of \(K_\varepsilon\) by
		\[
		x=\rho_\varepsilon(\theta)t\theta.
		\]
		Under this parametrization, write
		\[
		v\bigl(\rho_\varepsilon(\theta)t\theta\bigr)
		=
		F(t,\theta).
		\]
		The corresponding Jacobian is
		\[
		dx
		=
		\rho_\varepsilon(\theta)^n t^{n-1}
		\,dt\,d\sigma(\theta),
		\]
		while the chain rule gives
		\[
		\ \ \nabla v
		=
		\rho_\varepsilon(\theta)^{-1}
		\left[
		F_t e_r
		+\frac{1}{t}
		\left(
		\nabla_{\Sph}F
		-tF_tb_\varepsilon
		\right)
		\right],
		\ \ e_r=\theta.
		\]
		Thus a direct polar-coordinate calculation gives
		\begin{align}
			E_\varepsilon(F)
			&:=
			\int_{\R^n\setminus K_\varepsilon}|\nabla v|^p\,dx \nonumber\\
			&=
			\int_{\Sph^{n-1}}\int_1^\infty
			\rho_\varepsilon^k t^{n-1}
			\left[
			F_t^2+t^{-2}
			\left|\nabla_{\Sph}F-tF_tb_\varepsilon\right|^2
			\right]^{p/2}
			dt\,d\sigma.
			\label{eq:exact-energy}
		\end{align}
		Consider the function
		\[
		F_T(t,\theta):=t^{-\mu},
		\ \ t\geq1,
		\]
		extended by the constant \(1\) on \(K_\varepsilon\). It
		satisfies
		\begin{equation}\label{eq:gauge-energy}
			E_\varepsilon(F_T)
			=
			\mu^{p-1}S_p(K_\varepsilon).
		\end{equation}
		To verify this, observe that on \(\partial K_\varepsilon\),
		\[
		\left|
		\nabla r_{K_\varepsilon}(x)
		\right|
		=
		h_{K_\varepsilon}
		\bigl(\nu_{K_\varepsilon}(x)\bigr)^{-1}.
		\]
		Therefore, the radial representation of
		\(\partial K_\varepsilon\) yields
		\[
		S_p(K_\varepsilon)
		=
		\int_{\Sph^{n-1}}
		\rho_\varepsilon^k
		\left(1+|b_\varepsilon|^2\right)^{p/2}
		\,d\sigma.
		\]
		Substituting \(F_T\) into \eqref{eq:exact-energy} and evaluating the
		radial integral,
		\[
		\mu^p
		\int_1^\infty
		t^{n-1-p(\mu+1)}\,dt
		=
		\mu^{p-1},
		\]
		we obtain \eqref{eq:gauge-energy}.
		
	We refine the preceding function by defining
	\begin{equation*}\label{eq:improved-competitor}
		F_*(t,\theta)
		=
		t^{-\mu}
		+\varepsilon\mu Y_\ell(\theta)
		\bigl(t^{-s_\ell}-t^{-\mu}\bigr).
	\end{equation*}
	Since \(F_*(1,\theta)=1\), we extend \(F_*\) to \(K_\varepsilon\)
	by defining
	\(
	F_*\equiv1
	\text{ on }K_\varepsilon.
	\)
The extended function has trace \(1\) on
\(\partial K_\varepsilon\) and is an admissible finite-energy test
function in the corresponding homogeneous Sobolev space. To approximate \(F_*\) by admissible functions in the capacity
definition \eqref{2.9}, fix \(\delta>0\) and modify \(F_*\) near
the boundary by setting it equal to \(1\) for
\(1\leq t\leq1+\delta\). In the transition region
\(1+\delta<t<1+2\delta\), interpolate with respect to \(t\) so that
the modified function equals \(1\) at \(t=1+\delta\) and
\(F_*(1+2\delta,\theta)\) at \(t=1+2\delta\). 
Since
\[
F_*(t,\theta)-1=O(t-1),
\ \ \text{as }t\downarrow1,
\]
uniformly in \(\theta\), the interpolation can be chosen with bounded
gradient. Its contribution to the energy is therefore \(O(\delta)\).

Next, choose a smooth radial cutoff that equals \(1\) on
\(\{|x|\leq R\}\) and vanishes on \(\{|x|\geq2R\}\). The energy of the
discarded tail and the additional term arising from the gradient of
the cutoff are both bounded by
\[
O\bigl(R^{\,n-p(\mu+1)}\bigr)
=
O(R^{-\mu})
=
o(1),
\ \ \text{as }R\to\infty.
\]
The truncated Sobolev function is compactly supported and remains
identically equal to \(1\) on a neighborhood of \(K_\varepsilon\).
Mollifying it at a scale smaller than the width of this neighborhood
produces a function in \(C_c^\infty(\R^n)\) that is still equal to
\(1\) on \(K_\varepsilon\), while its energy converges to that of the
truncated function. Let the mollification scale tend to zero. Then \(R\to\infty\), and
finally \(\delta\to0\). Thus we obtain
\begin{equation}\label{eq:capacity-upper-competitor}
	\pcap_p(K_\varepsilon)
	\leq
	E_\varepsilon(F_*).
\end{equation}
This approximation argument is the standard passage between the
homogeneous Sobolev and smooth definitions of capacity; see
\cite[Chapter~2]{Mazya}.
		
We next compute the second-order change in energy. For \(a>0\), let
\[
u(t)=t^{-\mu},
\ \
w_a(t,\theta)=\mu t^{-a}Y_\ell(\theta),
\ \
T_a=t^{-1}\nabla_{\Sph}w_a.
\]
Define
\[
Z=w_{s_\ell}-w_\mu.
\]
Then the improved function can be written as
\[
F_*=u+\varepsilon Z.
\]		
Let \(c\neq0\), and decompose
\[
z=z_re_r+z_{\mathrm{tan}}
\ \ \text{and} \ \
z_{\mathrm{tan}}\perp e_r,
\]
with \(|z|\leq C|c|\). After factoring out \(|c|\), the normalized
argument remains in a fixed compact set away from the origin for all
sufficiently small \(|\varepsilon|\). Taylor's theorem then yields the following uniform expansion:
\begin{align}\label{eq:vector-taylor}
	|ce_r+\varepsilon z|^p
	={}&
	|c|^p
	+p|c|^{p-2}cz_r\varepsilon
	\nonumber\\
	&+
	\frac{p}{2}|c|^{p-2}
	\left(
	(p-1)z_r^2+|z_{\mathrm{tan}}|^2
	\right)\varepsilon^2
	+O\bigl(\varepsilon^3|c|^p\bigr).
\end{align}		
To isolate the second-order angular terms, define
\[
\widetilde b_\varepsilon
=
b_\varepsilon
-\varepsilon\nabla_{\Sph}Y_\ell
=
O_{C^1}(\varepsilon^2).
\]
The gradient vectors appearing in the integrand of
\eqref{eq:exact-energy} can then be written as
\begin{align*}
	\mathcal{A}_T
	&=
	\left(
	u',
	\varepsilon T_\mu-u'\widetilde b_\varepsilon
	\right),\\
	\mathcal{A}_*
	&=
	\left(
	u'+\varepsilon Z_t,
	\varepsilon T_{s_\ell}
	-\varepsilon^2Z_t\nabla_{\Sph}Y_\ell
	-u'\widetilde b_\varepsilon
	+O(\varepsilon^3|u'|)
	\right).
\end{align*}
Thus, relative to the radial base vector \((u',0)\), their respective
first-order perturbations are
\[
(0,T_\mu)
\ \ \text{and} \ \
(Z_t,T_{s_\ell}).
\]
Moreover, the radial factor satisfies
\[
\rho_\varepsilon^k
=
1+k\varepsilon Y_\ell
+O(\varepsilon^2).
\]
The order-\(\varepsilon\) term in
\(E_\varepsilon(F_T)-E_\varepsilon(F_*)\) vanishes, since for each
fixed \(t\), \(Z_t(t,\cdot)\) is a scalar multiple of \(Y_\ell\),
whose spherical mean is zero.

Let
\[
W(t)=t^{n-1}\lvert u'(t)\rvert^{p-2}.
\]
Applying the vector expansion \eqref{eq:vector-taylor} to the exact
energy formula \eqref{eq:exact-energy}, we obtain the following
unreduced second-order expansion:
\begin{align}
	E_\varepsilon(F_T)-E_\varepsilon(F_*)
	={}&
	\frac{p}{2}\varepsilon^2
	\int_{\Sph^{n-1}}\int_1^\infty
	W(t)
	\left(
	\lvert T_\mu\rvert^2
	-\lvert T_{s_\ell}\rvert^2
	-(p-1)Z_t^2
	\right)
	\,dt\,d\sigma
	\nonumber\\
	&\quad
	-pk\varepsilon^2
	\int_{\Sph^{n-1}}\int_1^\infty
	W(t)u'Y_\ell Z_t
	\,dt\,d\sigma
	+o(\varepsilon^2).
	\label{eq:raw-energy-difference}
\end{align}
The second integral arises precisely from the interaction between the
first-order term \(k\varepsilon Y_\ell\) in the expansion of
\(\rho_\varepsilon^k\) and the first-order radial variation of the
energy density \(\lvert\nabla F_*\rvert^p\). The terms involving the
second-order contribution to \(\rho_\varepsilon^k\) are identical for
the two energies and therefore cancel. Moreover, the second-order
correction vectors
\[
-u'\widetilde b_\varepsilon
\ \ \text{and}\ \
-\varepsilon^2Z_t\nabla_{\Sph}Y_\ell
\]
are both tangential and hence orthogonal to the radial base gradient.
Their linear pairings with the base gradient therefore vanish. Hence, they
produce no additional contribution at order \(\varepsilon^2\).

		Define 
		\begin{align}
			Q(w)
			=
			\int_{\Sph^{n-1}}\int_1^\infty
			&\mu^{p-2}
			t^{n-1-(\mu+1)(p-2)}
			\nonumber\\[-2mm]
			&\times
			\left[(p-1)w_t^2+t^{-2}|\nabla_{\Sph}w|^2\right]
			dt\,d\sigma.
			\label{eq:Q}
		\end{align}
		Since the weight in \eqref{eq:Q} is $W$, the difference between the
		coefficient in \eqref{eq:raw-energy-difference} and
		$\frac p2[Q(w_\mu)-Q(w_{s_\ell})]$ equals
		\[
		p(p-1)\int_{\Sph^{n-1}}\int_1^\infty
		W(w_\mu)_tZ_t\,dt\,d\sigma
		-pk\int_{\Sph^{n-1}}\int_1^\infty
		Wu'Y_\ell Z_t\,dt\,d\sigma.
		\]
		This vanishes pointwise in the remaining factors since
		\[
		(w_\mu)_t=\mu Y_\ell u',
		\ \ k=(p-1)\mu.
		\]
		Therefore, we have proved
		\begin{equation}\label{eq:energy-difference-Q}
			E_\varepsilon(F_T)-E_\varepsilon(F_*)
			=
			\frac p2\varepsilon^2
			\bigl[Q(w_\mu)-Q(w_{s_\ell})\bigr]
			+o(\varepsilon^2).
		\end{equation}
		
		The same calculation, with the elementary $t$-integrals performed, displays
		the complete second-order coefficient as
		\begin{align*}
			E_\varepsilon(F_T)-E_\varepsilon(F_*)
			={}&
			\frac p2\mu^p\varepsilon^2
			\int_{\Sph^{n-1}}Y_\ell^2\,d\sigma
			\nonumber\\
			&\times
			\left\{
			\lambda_\ell
			\left(\frac1\mu-\frac1{2s_\ell-\mu}\right)
			-(p-1)
			\left(\frac{s_\ell^2}{2s_\ell-\mu}-\mu\right)
			\right\}
			+o(\varepsilon^2).
			\label{eq:raw-energy-coefficient}
		\end{align*}
		Since \(t^{\mu-s_\ell}\leq1\) for \(t\geq1\), the ratios of all
		perturbation fields to \(\lvert u'\rvert\) are uniformly bounded.
		Thus, for fixed \(\ell\) and sufficiently small
		\(\lvert\varepsilon\rvert\), every line segment joining the base
		gradient to the corresponding perturbed gradient remains at least
		\(\lvert u'\rvert/2\) away from the origin.
		
		The third derivative of $z\mapsto|z|^p$ on this region
		is bounded by $C|u'|^{p-3}$. Hence, the pointwise remainder is at most
		$C|\varepsilon|^3|u'|^p$.  Thus, we have
		\[
		\int_1^\infty t^{n-1}|u'|^p\,dt<\infty.
		\]
		The dominated convergence theorem shows that the remainder in
		\eqref{eq:raw-energy-difference} is \(o(\varepsilon^2)\). The same
		argument remains valid when \(1<p<2\).
		
		Using
		\[
		\int_{\Sph^{n-1}}
		\lvert\nabla_{\Sph}Y_\ell\rvert^2\,d\sigma
		=
		\lambda_\ell
		\int_{\Sph^{n-1}}Y_\ell^2\,d\sigma,
		\]
		and evaluating the radial integral in \eqref{eq:Q}, we obtain
		\begin{equation*}\label{eq:Q-mode}
			Q(w_\tau)
			=
			\mu^p
			\frac{(p-1)\tau^2+\lambda_\ell}{2\tau-\mu}
			\int_{\Sph^{n-1}}Y_\ell^2\,d\sigma.
		\end{equation*}
		Using \eqref{eq:s-ell}, it follows that
		\begin{equation}\label{eq:Q-gap}
			Q(w_\mu)-Q(w_{s_\ell})
			=
			\mu^{p-1}(p-1)(s_\ell-\mu)^2
			\int_{\Sph^{n-1}}Y_\ell^2\,d\sigma.
		\end{equation}
		Combining \eqref{eq:capacity-upper-competitor} with
		\(\gammaconst=\mu^{1-p}\), we obtain
		\[
		S_p(K_\varepsilon)
		-\gammaconst\pcap_p(K_\varepsilon)
		\geq
		\mu^{1-p}
		\left[
		E_\varepsilon(F_T)-E_\varepsilon(F_*)
		\right].
		\]
		The desired estimate \eqref{eq:capacity-deficit} now follows from
		\eqref{eq:energy-difference-Q} and \eqref{eq:Q-gap}.
	\end{proof}
	
	The preceding estimates yield the following counterexample.
	
	\begin{proposition}\label{prop:symmetric-counterexample}
		For every $n\geq2$ and $1<p<n$, there is a sufficiently high even degree
		$\ell$ such that for sufficiently small \(\varepsilon\neq0\),
		\begin{equation*}\label{eq:symmetric-reverse}
			\Phi_p(K_\varepsilon)>
			\gammaconst\,\pcap_p(K_\varepsilon).
		\end{equation*}
	\end{proposition}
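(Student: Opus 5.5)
We combine Lemma~\ref{lem:affine-deficit} and Lemma~\ref{lem:capacity-deficit}. Subtracting the two expansions gives
\[
\Phi_p(K_\varepsilon)-\gammaconst\,\pcap_p(K_\varepsilon)
=\bigl[S_p(K_\varepsilon)-\gammaconst\pcap_p(K_\varepsilon)\bigr]-\bigl[S_p(K_\varepsilon)-\Phi_p(K_\varepsilon)\bigr]
\geq \varepsilon^2\Bigl(\int_{\Sph^{n-1}}Y_\ell^2\,d\sigma\Bigr) D_\ell+o(\varepsilon^2),
\]
where
\[
D_\ell:=\frac{p(p-1)}{2}(s_\ell-\mu)^2-\frac{n+p}{2p}A_\ell^2 .
\]
Since $Y_\ell\not\equiv0$, it suffices to find an even $\ell$ with $D_\ell>0$. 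For that fixed $\ell$, taking $|\varepsilon|$ small then makes the $o(\varepsilon^2)$ term negligible, which gives the strict inequality.

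The key step is the asymptotics in $\ell$. From \eqref{eq:s-ell}, $s_\ell\sim\sqrt{\lambda_\ell/(p-1)}\sim \ell/\sqrt{p-1}$, so $(s_\ell-\mu)^2$ grows like $\ell^2/(p-1)$. Hence the first term of $D_\ell$ is of order $\frac p2\ell^2$. For the second term, $A_\ell^2=(k-\lambda_\ell)^2a_{p,\ell}^2$ with $(k-\lambda_\ell)^2\sim\ell^4$. So we need $a_{p,2j}$ to decay faster than $1/\ell$; more precisely we need $\ell^4a_{p,\ell}^2=o(\ell^2)$, i.e.\ $\ell\,a_{p,\ell}\to0$.

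We estimate the product in \eqref{eq:cosine-eigenvalue}. If $p$ is an even integer, then $a_{p,2j}=0$ for $2j>p$, because the factor with $2r=p$ vanishes, and the claim is immediate. Otherwise, for large $r$ each factor has absolute value $\frac{2r-p}{2r+n+p}=1-\frac{n+2p}{2r}+O(r^{-2})$. Therefore
\[
|a_{p,2j}|\asymp j^{-(n+2p)/2}
\]
(up to a constant depending on $n,p$), by comparing $\sum\log$ with $-\frac{n+2p}{2}\log j$, or via Gamma-function asymptotics, since $\prod_r\frac{r-p/2}{r+(n+p)/2}=\frac{\Gamma(j-p/2)\Gamma((n+p)/2)}{\Gamma(-p/2)\Gamma(j+(n+p)/2)}$. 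Consequently
\[
A_\ell^2\lesssim \ell^{4-n-2p}.
\]
Because $n\geq2$ and $p>1$, the exponent $4-n-2p<0<2$, so $A_\ell^2=o(\ell^2)$ and indeed $A_\ell\to0$. Thus $D_\ell\to+\infty$ along even $\ell$, and in particular $D_\ell>0$ for all sufficiently large even $\ell$.

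The main obstacle I expect is making this decay estimate on $a_{p,\ell}$ rigorous and uniform for non-integer $p$, including the sign bookkeeping of the finitely many early factors with $2r<p$. Those factors only contribute a fixed nonzero constant, so the Gamma-ratio form handles everything cleanly. A cruder alternative is the bound $|a_{p,2j}|\leq C\prod_{r\geq r_0}^{j-1}\bigl(1-\frac{n+2p}{2r+n+p}\bigr)\leq C' j^{-(n+2p)/2}$ via $\log(1-x)\leq -x$; this already suffices, since we only need $\ell\,a_{p,\ell}\to0$, which holds whenever the exponent $(n+2p)/2$ exceeds $1$.
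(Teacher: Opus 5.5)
Your proposal is correct and matches the paper's proof. You combine Lemmas~\ref{lem:affine-deficit} and~\ref{lem:capacity-deficit} into the same second-order coefficient (your $D_\ell$ is exactly half the paper's bracket), use $A_\ell=0$ for $\ell>p$ when $p$ is an even integer and the Gamma-ratio decay $|a_{p,2j}|\asymp j^{-p-n/2}$ otherwise, and compare this with $s_\ell-\mu\sim\ell/\sqrt{p-1}$. Your extra elementary bound via $\log(1-x)\le -x$ is a valid, slightly more self-contained substitute for the Gamma asymptotics.
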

	
	\begin{proof}
		Combining Lemmas~\ref{lem:affine-deficit} and
		\ref{lem:capacity-deficit}, we obtain
		\begin{align}
			\Phi_p(K_\varepsilon)
			-\gammaconst\pcap_p(K_\varepsilon)
			\geq{}&
			\frac{\varepsilon^2}{2}
			\left[
			p(p-1)(s_\ell-\mu)^2
			-\frac{n+p}{p}A_\ell^2
			\right]
			\nonumber\\
			&\times
			\int_{\Sph^{n-1}}Y_\ell^2\,d\sigma
			+o(\varepsilon^2).
			\label{eq:master-gap}
		\end{align}
		It therefore suffices to choose \(\ell\) so that the coefficient in
		brackets is positive.
		
		If \(p=2q\) is an even integer, then
		\eqref{eq:cosine-eigenvalue} immediately yields
		\[
		a_{p,2j}=0
		\ \ \text{for every }j>q.
		\]
		If \(p\) is not an even integer, the product formula can instead be
		expressed in terms of Gamma functions as
		\begin{equation*}\label{eq:gamma-eigenvalue}
			a_{p,2j}
			=
			(-1)^j
			\frac{
				\Gamma(j-p/2)\Gamma((n+p)/2)
			}{
				\Gamma(-p/2)\Gamma(j+(n+p)/2)
			}.
		\end{equation*}
		The standard asymptotic formula for ratios of Gamma functions yields
		\[
		\lvert a_{p,2j}\rvert
		=
		O\bigl(j^{-p-n/2}\bigr),
		\ \
		A_{2j}
		=
		O\bigl(j^{2-p-n/2}\bigr)
		=
		o(1),
		\]
		where the last relation follows from \(n+2p>4\). By contrast, the definition of \(s_{2j}\) gives
		\[
		s_{2j}-\mu
		=
		\frac{
			\sqrt{\mu^2+4\lambda_{2j}/(p-1)}-\mu
		}{2}
		\sim
		\frac{2j}{\sqrt{p-1}}
		\ \ \text{as }j\to\infty.
		\]
		Therefore, when \(p\) is not an even integer, the expression in
		brackets in \eqref{eq:master-gap} satisfies
		\[
		p(p-1)(s_{2j}-\mu)^2
		-\frac{n+p}{p}A_{2j}^2
		=
		4pj^2\bigl(1+o(1)\bigr).
		\]
		When \(p=2q\) is an even integer, \(A_{2j}=0\) for \(j>q\), while the
		first term in the bracket is strictly positive. Hence, in either case,
		the bracket is positive for every sufficiently large \(j\). Fix such
		a \(j\), and let \(\ell=2j\). Since the remainder in
		\eqref{eq:master-gap} is \(o(\varepsilon^2)\) for this fixed degree,
		the desired strict inequality holds for every sufficiently small
		\(\varepsilon\neq0\). 
	\end{proof}
		\begin{proof}[Proof of (i) in Theorem \ref{thm:main}]
	By Proposition~\ref{prop:symmetric-counterexample}, we obtain
	part~\textup{(i)} of Theorem~\ref{thm:main}. In particular, the
	geometric inequality fails even for origin-symmetric convex bodies
	with \(C^\infty\) boundary and everywhere positive Gauss curvature.
		\end{proof}
	\section{Proof of (\textnormal{ii}) in Theorem \ref{thm:main}}
	\label{sec:analytic}
	
	We introduce the standard flat functionin
	\[
	\vartheta(s)
	=
	\begin{cases}
		\exp(-1/s), & s>0,\\
		0, & s\leq0,
	\end{cases}
	\]
	and define
	\begin{equation*}\label{eq:flat-cutoff}
		g(s)
		=
		\frac{\vartheta(2-s)}
		{\vartheta(2-s)+\vartheta(s-1)}.
	\end{equation*}
	Then \(g\in C^\infty(\R)\), with
	\[
	g\equiv1 \ \ \text{on }(-\infty,1],
	\ \ 
	g\equiv0 \ \ \text{on }[2,\infty),
	\]
	and \(g'(s)<0\) for every \(s\in(1,2)\).

The following proposition transfers the preceding geometric
construction to the corresponding functional setting.
	
	\begin{proposition}\label{prop:analytic-counterexample}
		Let \(K\in\Kzero\) be a \(C^\infty\), origin-symmetric convex body
		satisfying
		\[
		\Phi_p(K)
		>
		\gammaconst\,\pcap_p(K).
		\]
		Define
		\begin{equation}\label{eq:analytic-test}
			f(x):=g\bigl(r_K(x)\bigr).
		\end{equation}
		Then \(f\) is a nonnegative function in \(C_0^\infty(\R^n)\) and
		satisfies
		\begin{equation}\label{eq:analytic-reverse}
			\int_0^\infty
			\Phi_p\bigl(\convlev{f}{t}\bigr)\,dt
			>
			\gammaconst
			\int_0^\infty
			\pcap_p\bigl(\convlev{f}{t}\bigr)\,dt.
		\end{equation}
	\end{proposition}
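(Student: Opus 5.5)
The plan is to compute the level sets of \(f\) and their \(L_p\) convexifications explicitly. Each convexification should turn out to be a dilate of \(K\), and the homogeneity of \(\Phi_p\) and \(\pcap_p\) should then reduce \eqref{eq:analytic-reverse} to the hypothesis \(\Phi_p(K)>\gammaconst\pcap_p(K)\).

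\emph{Step 1: regularity of \(f\).} Since \(K\) is \(C^\infty\) and contains the origin in its interior, \(r_K\) is \(C^\infty\) on \(\R^n\setminus\{0\}\). Also \(g\circ r_K\equiv1\) on the neighbourhood \(K\) of the origin, because \(g\equiv1\) on \((-\infty,1]\). Hence \(f\in C^\infty(\R^n)\), and \(f\) vanishes outside \(2K\). Clearly \(0\le f\le1\), so \([f]_t=\varnothing\) for \(t>1\). Because \(g\) is a strictly decreasing diffeomorphism from \((1,2)\) onto \((0,1)\), each \(t\in(0,1)\) equals \(g(s_t)\) for a unique \(s_t\in(1,2)\), and
\[
[f]_t=\{r_K\le s_t\}=s_tK .
\]
Every such \(t\) is a regular value: for \(x\) with \(r_K(x)=s_t\) we have \(\nabla f(x)=g'(s_t)\nabla r_K(x)\neq0\), since \(g'(s_t)<0\) and \(\nabla r_K\neq0\) away from the origin.

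\emph{Step 2: the convexified level sets.} Fix \(t\in(0,1)\) and set \(s=s_t\).
\begin{itemize}
\item The outer normal of \([f]_t=sK\) at \(x=sy\), with \(y\in\partial K\), is \(\nu_K(y)\).
\item Since \(\nabla r_K\) is homogeneous of degree \(0\), we get \(|\nabla f(x)|=|g'(s)|\,h_K(\nu_K(y))^{-1}\).
\item The Hausdorff measure scales by \(s^{n-1}\) under \(y\mapsto sy\).
\end{itemize}
Substituting these into the right-hand side of \eqref{eq:convexification-definition} gives, for every even \(\phi\in C(\Sph^{n-1})\),
\[
\int_{\partial[f]_t}\phi(\nu_f)|\nabla f|^{p-1}\,d\Haus^{n-1}
= s^{n-1}|g'(s)|^{p-1}\int_{\Sph^{n-1}}\phi\,dS_p(K,\cdot)
= c_t\int_{\Sph^{n-1}}\phi\,dS_p(K,\cdot),
\]
where \(c_t:=s_t^{n-1}|g'(s_t)|^{p-1}>0\).

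Next, \(S_p(\lambda K,\cdot)=\lambda^{n-p}S_p(K,\cdot)\), and \(K\) is origin-symmetric. So \(\lambda_tK\) with \(\lambda_t=c_t^{1/(n-p)}\) is an origin-symmetric body with exactly this \(L_p\) surface area measure. By the uniqueness built into the definition of \(\convlev{f}{t}\) (here \(p\neq n\)), it follows that \(\convlev{f}{t}=\lambda_tK\).

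The case \(t=1\) is a single point of measure zero. For \(t>1\) the level set is empty, so it contributes nothing to either side; the natural convention is that the empty set gives \(0\) on both sides.

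\emph{Step 3: conclusion.} Both \(\Phi_p\) and \(\pcap_p\) are homogeneous of degree \(n-p\). Therefore \(\Phi_p(\convlev{f}{t})=c_t\Phi_p(K)\) and \(\pcap_p(\convlev{f}{t})=c_t\pcap_p(K)\). The substitution \(t=g(s)\) gives
\[
\int_0^1c_t\,dt=\int_1^2 s^{n-1}|g'(s)|^{p}\,ds ,
\]
which is finite and strictly positive because \(g'<0\) on \((1,2)\) and \(g'\) is bounded. Multiplying the strict inequality \(\Phi_p(K)>\gammaconst\pcap_p(K)\) by this positive constant yields \eqref{eq:analytic-reverse}.

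\emph{Main obstacle.} I expect the delicate point to be Step 2: verifying carefully that \(|\nabla r_K|=1/h_K(\nu_K)\) on \(\partial K\), and invoking uniqueness for the even \(L_p\) Minkowski problem to identify \(\convlev{f}{t}\) with \(\lambda_tK\) rather than merely with some body having the same measure. Once this is in place, everything else is scaling.
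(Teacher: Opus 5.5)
Your proposal is correct and follows the paper's proof essentially step for step: you identify $[f]_t=s_tK$ with $t=g(s_t)$, show that the measure defining the convexification is $s_t^{n-1}|g'(s_t)|^{p-1}\,dS_p(K,\cdot)$, and conclude $\langle f\rangle_t=c_t^{1/(n-p)}K$ from the scaling of $S_p$ together with uniqueness for the even $L_p$ Minkowski problem; then homogeneity of degree $n-p$ and the substitution $t=g(s)$ pull out the common factor $\int_1^2 s^{n-1}|g'(s)|^p\,ds$. Your explicit check of $|\nabla r_K|=1/h_K(\nu_K)$ on $\partial K$, and your remark that this common factor is finite (so the strict inequality is not between two infinite quantities), are small points the paper leaves implicit.
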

	
	\begin{proof}
		The function \(r_K\) is smooth on
		\(\R^n\setminus\{0\}\). Since \(g\equiv1\) on
		\((-\infty,1]\) and vanishes to infinite order at \(s=2\), the function \(f=g\circ r_K\) is smooth at the origin and matches
		smoothly with the zero function along the boundary of its support. Hence, the function $f$ defined by \eqref{eq:analytic-test} belongs to $C_0^\infty(\R^n)$.
		
		Fix \(t\in(0,1)\). Since \(g\) is strictly decreasing on \((1,2)\),
		there exists a unique \(s=s(t)\in(1,2)\) such that \(t=g(s)\).
		Applying the computation from the proof of
		\cite[Lemma~8]{LXYZ} to this regular level yields
		\begin{equation}\label{eq:level-dilate}
			\convlev{f}{t}
			=
			c(t)K,
			\ \
			c(t)^{n-p}
			=
			\lvert g'(s)\rvert^{p-1}s^{n-1}.
		\end{equation}
		Indeed, the level surface \(\{f=t\}\) is \(s\partial K\), and on this
		surface,
		\[
		\lvert\nabla f\rvert
		=
		\lvert g'(s)\rvert\,\lvert\nabla r_K\rvert.
		\]
		Moreover, the \((n-1)\)-dimensional surface measure acquires the factor
		\(s^{n-1}\) under dilation. Thus, the Gauss map \(\nu_K\) satisfies
		\[
		(\nu_K)_*\!\left(
		\lvert\nabla r_K\rvert^{p-1}
		\,\Haus^{n-1}\!\restriction_{\partial K}
		\right)
		=
		S_p(K,\cdot).
		\]
		Therefore, the measure appearing in
		\eqref{eq:convexification-definition} is
		\[
		\lvert g'(s)\rvert^{p-1}s^{n-1}\,dS_p(K,\cdot).
		\]
		Since
		\[
		dS_p(cK,\cdot)=c^{n-p}\,dS_p(K,\cdot),
		\]
		the choice of \(c=c(t)\) in \eqref{eq:level-dilate} gives precisely
		this measure. Uniqueness for the even \(L_p\)-Minkowski problem
		implies
		\[
		\convlev{f}{t}=c(t)K.
		\]

		Both \(\Phi_p\) and \(\pcap_p\) are homogeneous of degree \(n-p\).
		Hence, for either
		\(
		\Psi=\Phi_p
		\ \text{or}\
		\Psi=\pcap_p,
		\)
		equation \eqref{eq:level-dilate} and the substitution \(t=g(s)\)
		yield
		\begin{align}
			\int_0^\infty
			\Psi\bigl(\convlev{f}{t}\bigr)\,dt
			=
			A_{g,p}\,\Psi(K),
			 \ \ \text{with} \ \ 
			A_{g,p}
			=
			\int_1^2
			\lvert g'(s)\rvert^p s^{n-1}\,ds
			>0.\label{eq:common-factor}
		\end{align}
		Since \(0\leq f\leq1\), the relevant level set is empty for \(t>1\),
		and the endpoint levels have measure zero.
		Therefore, because \(A_{g,p}>0\), combining
		\eqref{eq:common-factor} with
		\[
		\Phi_p(K)>\gammaconst\,\pcap_p(K)
		\]
		proves \eqref{eq:analytic-reverse}.
	\end{proof}
	
	\begin{proof}[Proof of (ii) in Theorem \ref{thm:main}]

	Part~\textup{(ii)} follows by applying
	Proposition~\ref{prop:analytic-counterexample} to a convex body
	constructed in Proposition~\ref{prop:symmetric-counterexample}.
	This completes the proof of Theorem~\ref{thm:main}.
		\end{proof}
	
	\section*{ Appendix: Verification of the special case \((n,p)=(3,2)\)}
	\label{sec:explicit}
	
	The preceding construction is obtained by a perturbative argument.
	We conclude by exhibiting an explicit convex body with a quantitative
	gap, thereby providing an independent verification of the
	normalizations in the special case \((n,p)=(3,2)\). For an even
	integer \(q\geq2\), define
	\[
	B_q^3
	:=
	\left\{
	x\in\R^3:
	x_1^q+x_2^q+x_3^q\leq1
	\right\}.
	\]
	
	\begin{proposition}\label{prop:explicit}
		For an even integer $q\geq2$,
		\begin{equation}\label{eq:exact-S2}
			\Phi_2(B_q^3)=S_2(B_q^3)
			=
			\frac{24\pi(q-1)}
			{q(q+1)\sin(\pi/q)}.
		\end{equation}
		In particular,
		\[
		\Phi_2(B_{24}^3)>\pcap_2(B_{24}^3).
		\]
	\end{proposition}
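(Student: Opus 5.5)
The plan is to exploit the special structure of $p=2$: the normalized cosine transform $\mathcal{T}_2$ only sees second moments of the $L_2$-surface area density, and the cubic symmetry of $B_q^3$ forces those moments to be isotropic. After that, $S_2(B_q^3)$ is computed explicitly with Beta integrals, and the capacity is bounded above by monotonicity.

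\emph{Step 1: $\Phi_2=S_2$.} For $n=3$, $p=2$ we have $(u\cdot v)^2=u^{T}(vv^{T})u$ and $\int_{\Sph^{2}}v_1^2\,d\sigma=4\pi/3$. Hence
\[
v_K(u)=\frac{3}{4\pi}\,u^{T}\Bigl(\int_{\Sph^{2}}vv^{T}\,dS_2(K,v)\Bigr)u
\]
is a quadratic form in $u$. We use that $B_q^3$ is invariant under coordinate permutations and sign changes. Consequently the matrix $\int vv^{T}\,dS_2(B_q^3,v)$ has vanishing off-diagonal entries (by the sign changes) and equal diagonal entries (by the permutations), so it is a multiple of $I$. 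Thus $v_K$ is constant on $\Sph^2$. By \eqref{eq:normalized-functionals}, $\langle v_K^{-3/2}\rangle_\sigma^{-2/3}=\langle v_K\rangle_\sigma$, which gives $\Phi_2(B_q^3)=S_2(B_q^3)$. For $q\ge 4$ the boundary is only $C^1$ with degenerate curvature on the coordinate planes, so $q_K$ may not exist as a function. This is harmless, because the argument only uses the measure $dS_2$ through \eqref{eq:Vp}.

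\emph{Step 2: compute $S_2$.} Let $r(x)=(\sum x_i^q)^{1/q}$. Euler's relation gives $x\cdot\nabla r=r=1$ on $\partial B_q^3$, and $\nu=\nabla r/|\nabla r|$, so $x\cdot\nu=|\nabla r|^{-1}$. Hence
\[
S_2=\int_{\partial B_q^3}|\nabla r|\,d\Haus^2 .
\]
On the boundary $\partial_i r=x_i^{q-1}$, so $|\nabla r|=\sum_i x_i^{2q-2}/|\nabla r|$. By symmetry,
\[
S_2=3\int_{\partial B_q^3}\frac{x_3^{2q-2}}{|\nabla r|}\,d\Haus^2 .
\]
We parametrize the upper and lower graphs over $\{x_1^q+x_2^q\le1\}$ using $d\Haus^2=|\nabla r|/|x_3|^{q-1}\,dx_1dx_2$. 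This gives
\[
S_2=24\int_{x_i\ge0,\ x_1^q+x_2^q\le1}(1-x_1^q-x_2^q)^{(q-1)/q}\,dx_1\,dx_2 .
\]
The substitution $y_i=x_i^q$ turns this into a Dirichlet integral, namely $q^{-2}\Gamma(1/q)^2\Gamma(2-1/q)/\Gamma(2+1/q)$. Simplifying with $\Gamma(2-1/q)=(1-1/q)\Gamma(1-1/q)$, $\Gamma(2+1/q)=(1+1/q)q^{-1}\Gamma(1/q)$ and the reflection formula $\Gamma(1/q)\Gamma(1-1/q)=\pi/\sin(\pi/q)$ yields \eqref{eq:exact-S2}.

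\emph{Step 3: the strict inequality.} We use $B_{24}^3\subset[-1,1]^3\subset\sqrt3\,B^3$. Monotonicity and degree-$1$ homogeneity of $\pcap_2$ in $\R^3$, together with $\pcap_2(B^3)=4\pi$, give
\[
\pcap_2(B_{24}^3)\le4\sqrt3\,\pi\approx21.77 .
\]
On the other hand, \eqref{eq:exact-S2} with $q=24$ gives $23\pi/(25\sin(\pi/24))\approx22.14$, which is strictly larger.

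I expect the main obstacle to be the numerical margin in Step 3, which is small (about $1.7\%$). It should be certified with explicit bounds, for instance $\sin(\pi/24)<\pi/24-(\pi/24)^3/6+(\pi/24)^5/120$, rather than with decimal approximations. A secondary point is justifying the graph parametrization near the equator $x_3=0$, where the density blows up but remains integrable.
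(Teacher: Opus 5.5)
Your proposal is correct and follows the paper's proof essentially step for step. The cubic symmetry makes the second-moment matrix of $S_2(B_q^3,\cdot)$ a multiple of $I$, so $\Phi_2=S_2$. The value of $S_2$ comes from the flux of $X=(x_1^{q-1},x_2^{q-1},x_3^{q-1})$, evaluated as a Dirichlet integral: the paper converts this flux to $3(q-1)\int_{B_q^3}|x_1|^{q-2}\,dx$ by the divergence theorem, while your graph parametrization computes the $x_3$-component of the same flux directly. The capacity is then bounded via $B_{24}^3\subset\sqrt3\,B^3$.

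The numerical margin you flag is not really an obstacle. The bound $\sin x<x$ already gives $\Phi_2(B_{24}^3)>552/25>22$, and $\pi<22/7$ with $\sqrt3<7/4$ gives $4\sqrt3\,\pi<22$.
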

	
	\begin{proof}
		For a smooth convex body \(K\in\mathcal K_o^3\), define the symmetric
		matrix
		\[
		M_K
		=
		\int_{\partial K}
		\frac{\nu_K(x)\otimes\nu_K(x)}
		{x\cdot\nu_K(x)}
		\,d\Haus^2(x).
		\]
		Note that for
		\(a,b\in\R^3\), the tensor product \(a\otimes b\) is the matrix defined
		by
		\[
		(a\otimes b)_{ij}:=a_i b_j,
		\ \ 1\leq i,j\leq3.
		\]
		Thus,
		\[
		(M_K)_{ij}
		=
		\int_{\partial K}
		\frac{\nu_{K,i}(x)\nu_{K,j}(x)}
		{x\cdot\nu_K(x)}
		\,d\Haus^2(x).
		\]
		Since \(K\in\mathcal K_o^3\), we have
		\(x\cdot\nu_K(x)>0\) on \(\partial K\). Hence, the matrix \(M_K\) is
		well defined and positive semidefinite.
	Equation \eqref{eq:Vp} yields
	\[
	V_2(K,\bar u)
	=
	\frac{1}{12}\,u^{\mathsf T}M_Ku.
	\]
	The body \(B_q^3\) is invariant under arbitrary sign changes and
	permutations of its coordinates. These symmetries imply that
	\[
	M_{B_q^3}=m_q I
	\]
	for some \(m_q>0\). Consequently,
	\[
	V_2(B_q^3,\bar u)=\frac{m_q}{12}
	\]
	is independent of \(u\in\Sph^2\). The normalization formulas
	\eqref{eq:Sp-average} and \eqref{eq:Phi} thus give
	\[
	\Phi_2(B_q^3)=S_2(B_q^3).
	\]
		
	Define the vector field
	\[
	X(x)
	=
	\bigl(x_1^{q-1},x_2^{q-1},x_3^{q-1}\bigr).
	\]
	For \(x\in\partial B_q^3\), we have
	\[
	\nu(x)=\frac{X(x)}{\lvert X(x)\rvert},
	\ \ 
	x\cdot X(x)=1.
	\]
	It follows that
	\[
	\frac{1}{x\cdot\nu(x)}
	=
	\lvert X(x)\rvert
	=
	X(x)\cdot\nu(x).
	\]
	Therefore, by the divergence theorem and coordinate symmetry,
	\begin{align}
		S_2(B_q^3)
		&=
		\int_{\partial B_q^3}
		X\cdot\nu\,d\Haus^2
		=
		\int_{B_q^3}\operatorname{div}X\,dx
		\nonumber\\
		&=
		3(q-1)
		\int_{B_q^3}
		\lvert x_1\rvert^{q-2}\,dx.
		\label{eq:divergence}
	\end{align}
	To compute the remaining integral, decompose \(B_q^3\) into its eight
	orthants and introduce the variables
	\[
	y_i=\lvert x_i\rvert^q,
	\ \ i=1,2,3.
	\]
	The Dirichlet simplex integral then gives
	\[
	\int_{B_q^3}
	\lvert x_1\rvert^{q-2}\,dx
	=
	\frac{8}{q^3}
	\frac{
		\Gamma(1-1/q)\Gamma(1/q)^2
	}{
		\Gamma(2+1/q)
	}.
	\]	Inserting this expression into \eqref{eq:divergence} and applying
	Euler's reflection formula
	\[
	\Gamma(z)\Gamma(1-z)
	=
	\frac{\pi}{\sin(\pi z)}
	\]
	yields \eqref{eq:exact-S2}.
		
	For \(q=24\), it follows that
	\[
	\Phi_2(B_{24}^3)
	=
	\frac{23\pi}{25\sin(\pi/24)}
	>
	\frac{552}{25}
	>
	22,
	\]
	where the first strict inequality follows from
	\(\sin x<x\) for \(x>0\). On the other hand,
	\[
	B_{24}^3
	\subset
	[-1,1]^3
	\subset
	\sqrt{3}\,B^3.
	\]
	By monotonicity and homogeneity of the \(2\)-capacity, together with
	\(\pcap_2(B^3)=4\pi\), we obtain
	\[
	\pcap_2(B_{24}^3)
	\leq
	\pcap_2(\sqrt{3}\,B^3)
	=
	4\pi\sqrt{3}
	<
	22.
	\]
	The final strict inequality follows from
	\[
	\pi<\frac{22}{7},
	\ \
	\sqrt{3}<\frac{7}{4}.
	\]	
	
	\end{proof}

	Applying the construction from
	Proposition~\ref{prop:analytic-counterexample} to $K=B_{24}^3$ gives a
	fully explicit smooth counterexample to the analytic inequality in $\R^3$
	at $p=2$.

\end{document}